\newcommand{\N}{\mathbb{N}}
\newcommand{\Q}{\mathbb{Q}}
\newcommand{\R}{\mathbb{R}}
\newcommand{\C}{\mathbb{C}}
\newcommand{\A}{\mathbb{A}}
\newcommand{\PP}{\mathbb{P}}
\DeclareMathOperator{\Supp}{Supp}
\DeclareMathOperator{\mult}{mult}
\DeclareMathOperator{\ord}{ord}
\DeclareMathOperator{\vol}{vol}
\DeclareMathOperator{\wt}{wt}
\DeclareMathOperator{\Aut}{Aut}
\DeclareMathOperator{\Diff}{Diff}
\theoremstyle{plain}
\newtheorem{lemma}{Lemma}[section]
\newtheorem{proposition}[lemma]{Proposition}
\newtheorem{theorem}[lemma]{Theorem}
\newtheorem{corollary}[lemma]{Corollary}
\theoremstyle{definition}
\newtheorem{definition}[lemma]{Definition}
\newtheorem{remark}[lemma]{Remark}
\newtheorem*{definition*}{Definition}
\newtheorem*{remark*}{Remark}
\newtheorem*{remarks*}{Remarks}
\newtheorem*{example*}{Example}
\newtheorem*{examples*}{Examples}
\newtheorem*{conjecture*}{Conjecture}
\newtheorem*{conjectures*}{Conjectures}
\newtheorem*{exercise*}{Exercise}
\newtheorem*{exercises*}{Exercises}
\newtheorem*{problem*}{Problem}
\newtheorem*{problems*}{Problems}
\title{On local stability threshold of del Pezzo surfaces}
\author{Erroxe Etxabarri-Alberdi}
\date{}
\newcommand{\Addresses}{{
  \bigskip
  \footnotesize

  \textsc{School of Mathematical Sciences, University of Nottingham, Nottingham, NG7~2RD, United Kingdom}\par\nopagebreak
  \textit{E-mail address}: \texttt{Erroxe.EtxabarriAlberdi@nottingham.ac.uk}
}}
\begin{document}

\maketitle

\begin{abstract}
We complete the classification of local stability thresholds for smooth del Pezzo surfaces of degree~2. In particular, we show that this number is irrational if and only if {there is a unique (-1)-curve passing through the point where we are computing the local invariant.}
\end{abstract}
\textbf{2020 Mathematics Subject Classification:} Primary – 14J45; Secondary – 32Q20

\section{Introduction}

K-stability is an algebraic condition that detects the existence of Kähler-Einstein metrics on Fano varieties \cite{Tia97,Don02,Tia15,LXZ22,CDS15}. It also provides the right framework to construct compact moduli for Fano varieties \cite{ABHLX20, BHLLX21, BLX22, BX19, Jia18, LWX21, LXZ22, Xu20, XZ20}.

\vskip 0.3cm
In this paper, we will focus on the study $\delta$, {mostly} in the local situation. \\
It was originally conjectured that $\delta(X)$ is rational for K-polystable Fano varieties, and was proven by Liu, Xu, and Zhuang in \cite{LXZ22}
{when} $\delta (X)<(n+1)/n$ where $n$ is the dimension of $X$. However, the local stability threshold, $\delta_p(X)$, is more mysterious. {Its rationality can be unpredictable}. Note that $\delta(X)$ is the infimum of $\delta_p(X)$ for all $p\in X$. The only known example where the local delta invariant is irrational is in cubic surfaces (see \cite[Lemma A.6]{AZ20}).
{The rationality of $\delta_p(X)$ is well-established for all points $p$ in the case of del Pezzo and weak del Pezzo surfaces with a degree equal to or greater than 4 (refer to \cite[\S 2]{acc2021}, \cite{Aka23, Den23})}.
It is expected that irrational $\delta_p$ exists only for certain points $p$ in del Pezzo surfaces of degrees 1, 2, and 3. In this paper, we settle this in degree 2 with the following theorem.


\begin{theorem}\label{maintheo}
Let $X\subset \PP (1,1,1,2)$ be a smooth del Pezzo surface of degree 2 and let $p_0=(x,y,z,w)\in X$ be a closed point with $w\neq 0$. Assume {that there is a unique (-1)-curve $L$ passing through the point $p_0$}. Then $\delta_{p_0}(X)=\frac{6}{71} (11 + 8 \sqrt{3})$ and it is computed by {taking the limit of a sequence of} weighted blow ups at $p_0$ with {$\wt(u)=a_m$ and $\wt(v)=b_m$ such that $a_m/b_m\to 2/\sqrt{3}$ when $m\to \infty $}, where $u$ and $v$ are the local coordinates such that $L=\{ u=0\}$.
\end{theorem}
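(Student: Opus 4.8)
The plan is to compute the local stability threshold $\delta_{p_0}(X)$ via the valuative criterion of Fujita--Li: $\delta_{p_0}(X) = \inf_F A_X(F)/S_X(F)$, where the infimum runs over all prime divisors $F$ over $X$ with center containing $p_0$. Since the del Pezzo surface of degree $2$ here has $-K_X$ ample with $(-K_X)^2 = 2$, and by assumption there is a unique $(-1)$-curve $L$ through $p_0$, I would first restrict attention to valuations adapted to the local geometry at $p_0$, namely the quasi-monomial valuations $\mathrm{wt}(u) = a$, $\mathrm{wt}(v) = b$ in local coordinates $(u,v)$ with $L = \{u=0\}$. The log discrepancy of such a valuation is $A_X(v_{a,b}) = a + b$ (after normalizing), so the task reduces to computing $S_X(v_{a,b})$ as a function of the ratio $t = a/b$ and then minimizing $(a+b)/S_X(v_{a,b})$.

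\textbf{Computing $S_X$ via Zariski decomposition.} The main computational engine is the formula
\[
S_X(v_{a,b}) = \frac{1}{(-K_X)^2}\int_0^\infty \mathrm{vol}\bigl(\pi^*(-K_X) - x\,E\bigr)\,dx,
\]
where $\pi$ is the appropriate (sequence of) weighted blow-up(s) extracting $v_{a,b}$ and $E$ the exceptional divisor. For rational ratios $a/b$ one gets a genuine weighted blow-up of a cyclic quotient singularity; for the limiting irrational ratio $2/\sqrt3$ one must take a limit of such. I would compute the volume function by running Zariski decomposition on $\pi^*(-K_X) - xE$: the negative part will involve the strict transform $\tilde L$ of $L$ (which has self-intersection dropping as we blow up, since $L$ passes through $p_0$) and possibly the exceptional curves of the weighted blow-up. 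This produces a piecewise-quadratic (in the rational case, piecewise-polynomial-in-$x$) formula for the volume, whose integral is an explicit function $S(t)$ of $t = a/b$. Setting $\frac{d}{dt}\bigl[(1+t)/S(t)\bigr] = 0$ should yield $t = 2/\sqrt3$ and the value $\frac{6}{71}(11 + 8\sqrt3)$.

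\textbf{The main obstacle.} The hard part will be two-fold. First, correctly identifying the Zariski chamber structure of $\pi^*(-K_X) - xE$ as $x$ increases: I expect at least two or three breakpoints as first $\tilde L$, then the weighted-blow-up exceptional curves, enter the negative part, and getting the intersection numbers right on the weighted blow-up (which has quotient singularities, so intersection numbers are rational) requires care with the combinatorics of the resolution. Second, handling the irrational limit rigorously: one must show that $\delta_{p_0}(X)$ is genuinely the infimum over the family $v_{a_m,b_m}$ with $a_m/b_m \to 2/\sqrt3$ and that no other valuation over $p_0$ does better. For the latter I would invoke that since $L$ is the unique $(-1)$-curve through $p_0$, any divisorial valuation computing a small threshold must have center $p_0$ and be dominated in the relevant sense by the quasi-monomial family — this is where I would lean on the degree-$\geq 4$ techniques cited (\cite[\S 2]{acc2021}, \cite{Aka23, Den23}) adapted to show that the $2$-dimensional quasi-monomial family suffices, reducing an a priori infinite-dimensional optimization to the one-variable problem in $t$.

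\textbf{Lower bound / optimality.} To complete the argument I would separately establish the matching lower bound $\delta_{p_0}(X) \geq \frac{6}{71}(11+8\sqrt3)$ by showing $A_X(F)/S_X(F) \geq \frac{6}{71}(11+8\sqrt3)$ for \emph{every} prime divisor $F$ over $X$ with $p_0$ in its center. Using that $S_X(F) \leq S_X(v_{a,b})$ for a suitable comparison valuation dominating $F$ (via an Abban--Zhuang type flag/refinement argument with the flag $p_0 \in L \subset X$), this reduces to the computed one-parameter bound; since $\inf_t (1+t)/S(t)$ is attained at $t = 2/\sqrt3$ with the stated value, this closes the loop. I expect the Abban--Zhuang refinement over the flag $(L, p_0)$ to be the cleanest route, as in the companion treatments of the other del Pezzo degrees.
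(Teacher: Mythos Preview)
Your high-level architecture (upper bound via quasi-monomial valuations and Zariski decomposition, lower bound via Abban--Zhuang) matches the paper's, but there are two concrete gaps where your proposal would stall.

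\textbf{The missing negative curve.} For the Zariski decomposition of $\pi^*(-K_X)-tE$ on $X_{a,b}$ you anticipate only $\widetilde L$ and ``the weighted-blow-up exceptional curves'' in the negative part. On $X_{a,b}$ there is only the single exceptional divisor $E$, and $\widetilde L$ alone does not suffice: for $t$ beyond a second breakpoint one needs a further negative curve, and the paper's main technical contribution (all of \S\ref{Finding D}) is to produce it. The curve is $\widetilde D$, the strict transform of an irreducible $D\subset X$ with $L+D\sim -2K_X$; it is found by passing to the ordinary blow-up $\mathbf X\to X$ at $p_0$ (a weak degree-$1$ del Pezzo), applying the Bertini involution $\iota$ to the exceptional $(-1)$-curve, and pushing back. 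One then proves $\pi_{a,b}^*(D)=\widetilde D+2bE$ and $(\widetilde D)^2=3-4b/a<0$ in the relevant range. With both $\widetilde L$ and $\widetilde D$ in hand the Zariski decomposition has three chambers and yields $S(-K_X;E)=\dfrac{15a^2+34ab+8b^2}{12(3a+2b)}$, whose quotient with $A_X(E)=a+b$ is minimized at $a/b=2/\sqrt3$. Without $D$ you cannot write $\pi^*(-K_X)-tE$ as an effective combination past the second wall, so the volume computation would be incomplete and the upper bound would not come out.

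\textbf{The wrong flag for the lower bound.} You propose the flag $p_0\in L\subset X$. That flag is exactly what \cite[Lemma~2.15]{acc2021} uses, and it only yields $\delta_{p_0}(X)\geq 60/31$, strictly below the target $\frac{6}{71}(11+8\sqrt3)$. The paper instead takes, for each $m$, the plt blow-up $\pi_m:Y_m\to X$ with weights $(a_m,b_m)$ and applies Theorem~\ref{Kento form} with the exceptional divisor $E_m$ itself as the first step of the flag. One then computes $S(W^{E_m}_{\bullet,\bullet};x)$ at the (at most four) distinguished points of $E_m$---the two orbifold points and the points where $\widetilde D$ meets $E_m$---using the same Zariski decomposition already obtained. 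Passing to the limit $a_m/b_m\to 2/\sqrt3$, each of these local terms is checked to exceed $\frac{6}{71}(11+8\sqrt3)$, so the minimum in Theorem~\ref{Kento form} is realized by $A_X(E_m)/S(-K_X;E_m)\to\frac{6}{71}(11+8\sqrt3)$, closing the gap. The point is that the sharp lower bound requires the \emph{same} family of weighted blow-ups that computes the upper bound; a fixed flag through $L$ is not fine enough.
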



\paragraph{Sketch of the proof.}

In order to prove that the local stability threshold at {$p_0$ (as in Theorem \ref{maintheo})} is irrational, we follow these steps:
First, we take $E_{a,b}$ to be the exceptional divisor of a certain weighted blowup at $p_0$ with weights $(a,b)$. The choice of $(a,b)$ becomes clear in \S \ref{pmaintheo}. Using that exceptional divisor we compute an upper bound for $\delta_{p_0}(X)$. The difficulty here is to compute the expected vanishing order of the anticanonical divisor of $X$ with respect to {$E_{a,b}$}, which requires a Zariski Decomposition and a careful choice of negative curves in the weighted blowup of $X$. This is the main bulk of the work in section \S \ref{Finding D}.\\
Then, we use techniques from \cite{AZ20} to find lower bounds for $\delta_{p_0}(X)$, which requires a delicate choice of a minimazer sequence of prime divisors $E_{a_m,b_m}$ over $X$. The choice of $a_m$, $b_m$ is made so that this bound is exactly the upper bound found earlier by considering $E_{a,b}$. In other words, we compute $\delta_{p_0}(X)$.

\paragraph{Acknowledgements:} I thank Hamid Abban for suggesting the problem to me and his constant guidance. Moreover, I want to thank Tiago Duarte Guerreiro, Luca Giovenzana, James Jones, Antonio Trusiani, Nivedita Viswanathan and Ziquan Zhuang for numerous discussions and the interest shown.

\section{Preliminary results and definitions}

\subsection{Singularities}

In this paper we work over $\C$ and all varieties are assumed to be normal and projective. 

\begin{definition}
	A \textit{logarithmic pair} (also called a \textit{log pair}) is a pair $(X,D)$ consisting of a normal variety $X$ and a boundary $\Q$-divisor $D$. A boundary divisor $D=\sum d_kD_k$ is a divisor with rational coefficients between $0$ and $1$, i.e. $0\leq d_k\leq 1$, where the $D_k$ are prime divisors. We call $K_X+D$ the \textit{log canonical divisor} of the pair $(X,D)$.
\end{definition}

\begin{definition}
	Let $(X,D)$ be a log pair. Assume that $K_X+D$ is $\Q$-Cartier. Let $f: V\to X$ be a birational morphism from a nonsingular variety $V$. Write the ramification formula:
	\[
	K_V=f^*(K_X+D)-f_*^{-1}(D)+\sum a(E;X,D)E,
	\]
	where the sum runs over $f$-exceptional divisors $E$. Recall that $a(E;X,D)$ is a rational number independent of the morphism $f$ and depends only on the discrete valuation which corresponds to $E$. This is called the \textit{discrepancy} of the pair $(X,D)$ at $E$. The \textit{log discrepancy} of the pair $(X,D)$ at $E$  is defined to be
	\[
	A_{X,D}(E)=a(E;X,D)+1.
 \]
\end{definition}

There is a more general definition of log discrepancy in \cite[Def. 2.5]{LXZ22}.

Since we are working with Fano surfaces, it is essential to recall the characterization for ampleness and other properties of divisors which you can find in different references such as \cite{Matsuki,Laz04}.

\begin{definition}
	Let $r>0$ and $a_1,...,a_n$ be integers and let $x_1,...,x_n$ be coordinates on $\A^n$. Suppose that $\mu_r$ acts on $\A^n$ via:
	\[
	x_i\mapsto \varepsilon^{a_i}x_i \text{ for all } i,
	\]
	where $\varepsilon$ is a fixed primitive $r$-th root of unity. A singurality $\mathcal{Q}\in X$ is a \textit{quotient singularity} of \textit{type} $\frac{1}{r}(a_1,...,a_n)$ if $(X,\mathcal{Q})$ is isomorphic to an analytic neighbourhood of $(\A^n, 0)/\mu_r$, where $\mu_r$ is the set of all $r${-th} roots of the unity.
\end{definition}

Apart from the quotient singularities, we can also define the following ones:

\begin{definition}\cite[Def. 2.8]{Kol13}
	Let $(X,D)$ be a log pair where $X$ is a normal variety of dimension $\geq 2$ and $D=\sum a_iD_i$ is a sum of distinct prime divisors, where $a_i$ are rational non-negative numbers, all $\leq 1$. Assume that $m(K_X+D)$ is Cartier for some $m>0$. We say that $(X,D)$ is 
	\[
	\left.
	\begin{array}{c}
		klt\\
		plt
	\end{array}
	\right\rbrace
	\text{ if } a(E,X,D)\;\;
	\left\lbrace
	\begin{array}{l}
		>-1 \text{ for every } E, \\
		>-1 \text{ for every exceptional } E.
	\end{array}
	\right.
	\]
	Here klt is short for \textit{`Kawamata log terminal'}, and plt for \textit{`purely log terminal'}.
\end{definition}

\begin{definition}
	We say that $X$ is \textit{log Fano} if there exists a divisor $D$ such that $-(K_X+D)$ is ample and $(X, D)$ is Kawamata log terminal.
\end{definition}

\begin{definition}\label{Def 2.6}
	Let $(X,\Delta)$ be a log pair and let $F$ be a divisor over $X$, i.e. $F$ is a divisor in a variety $Y$ such that $\pi:Y\to X$ is a birational morphism. When $F$ is a divisor on $X$ we write $\Delta=\Delta_1+aF$ where $F\nsubseteq \Supp(\Delta_1)$; otherwise let $\Delta_1=\Delta$.
	\begin{itemize}
		\item[(i)] $F$ is said to be \textit{primitive} over $X$ if there exists a projective birational morphism $\pi: Y \to X$ such that $Y$ is normal, $F$ is a prime divisor on $Y$ and $-F$ is a $\pi$-ample $\Q$-Cartier divisor. We call $\pi: Y\to X$ the \textit{associated prime blowup} (it is uniquely determined by $F$).
		\item[(ii)] $F$ is said to be of \textit{plt type} if it is primitive over $X$ and the pair $(Y,\Delta_Y+F)$ is plt in a neighbourhood of $F$, where $\pi: Y\to X$ is the associated prime blowup and $\Delta_Y$ is the strict transform of $\Delta_1$ on $Y$. When $(X,\Delta)$ is klt and $F$ is exceptional over $X$, $\pi$ is called a \textit{plt blowup} over $X$.
	\end{itemize}
\end{definition}

\subsection{Bounds for the stability threshold}\label{K sta def}

In this paper, we do not see the original definition of K-stability in terms of test configurations given in \cite{Tia97, Don02}. Instead, we focus on a simpler characterization of K-stability for Fano surfaces introduced in \cite{FO18}, which we use later in our computations. Recall that our setup is del Pezzo surfaces with at most klt singularities. However, {these results can be generalised to higher dimensions.}

\begin{definition}\label{basic def k-sta}
	Let $X$ be a normal variety of dimension $n$ such that $K_X$ is a $\Q$-Cartier divisor. Let $f:Y \to X$ be a birational morphism and take $E$ a prime divisor on $Y$. We define the \textit{expected vanishing order} as follows:
 \[
 S(-K_X;E)=\frac{1}{(-K_X)^n}\int_0^{\tau(E)} \vol (f^*(-K_X)-tE) \,dt
 \]
 where $\vol (f^*(-K_X)-tE)$ denotes the volume of the divisor $f^*(-K_X)-tE$ (see e.g. \cite[\S 2.2.C]{Laz04}) and
 \[
 \tau(E)=\sup\left\{ t\in \Q \vert \vol (f^*(-K_X)-tE)>0\right\}.
 \]
 $\tau(E)$ is called the \textit{pseudoeffective threshold}.
 \end{definition}

Now, we can introduce the notions to characterize the K-stability for Fano varieties.

\begin{definition}
	Let $X$ be a Fano variety with at most klt singularities and let $-K_X$ be its anticanonical divisor. The (adjoint) \textit{stability threshold} (or \textit{$\delta$-invariant}) of $X$ is defined as
	\begin{equation}\label{sthres}
		\delta(X)=\inf_{E/X} \frac{A_{X}(E)}{S(-K_X;E)}
	\end{equation}
	where the infimum runs over all divisors $E$ over $X$.
	
	We say that a divisor $E$ over $X$ {\textit{computes}} $\delta(X)$ if it achieves the infimum in \eqref{sthres}. 
\end{definition}
\begin{theorem}
    \cite{LXZ22,Li17,FO18,Fuj19b,CP21,BJ20}. If $X$ is a Fano variety, then $X$ is K-(semi)stable if and only if $\delta(X)> 1$ $(\geq 1)$.
\end{theorem}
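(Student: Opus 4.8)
The plan is to assemble this statement from the valuative theory of K-stability developed in the cited works, treating the semistable and the stable cases separately, since they rest on inputs of very different depth.

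For the semistable assertion I would reduce $\delta(X)\geq 1$ directly to the Fujita--Li valuative criterion. Unwinding the definition \eqref{sthres}, the inequality $\delta(X)\geq 1$ is equivalent to $A_X(E)\geq S(-K_X;E)$ for every divisor $E$ over $X$, that is, to the nonnegativity of the Fujita invariant $\beta(E):=A_X(E)-S(-K_X;E)$ on all divisorial valuations. Fujita and Li proved that K-semistability of a Fano variety is exactly equivalent to $\beta(E)\geq 0$ for every divisorial valuation, so this half is immediate once the definitions are matched. The only subtlety is that $\delta(X)$ is defined as an infimum over \emph{all} divisors over $X$, while the criterion is phrased for divisorial valuations; here I would invoke Blum--Jonsson, who showed that $\delta(X)=\lim_{m\to\infty}\delta_m(X)$ (the limit of the basis-type log canonical thresholds) and that this infimum is already computed on divisorial, indeed quasi-monomial, valuations, so the two formulations coincide.

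For the easy direction of the stable case, I would prove $\delta(X)>1\Rightarrow X$ K-stable by routing through uniform K-stability. A value $\delta(X)>1$ yields a uniform gap: for every $E$ we get $A_X(E)\geq\delta(X)\,S(-K_X;E)$, hence $\beta(E)=A_X(E)-S(-K_X;E)\geq\bigl(1-\delta(X)^{-1}\bigr)A_X(E)$ with $1-\delta(X)^{-1}>0$. Reinterpreting this bound on $m$-basis-type divisors and passing to the limit gives uniform K-stability in the sense of Fujita--Odaka and Boucksom--Hisamoto--Jonsson, which trivially implies K-stability. So this half reduces to the limit identity for $\delta_m$ together with the standard comparison between the $\beta$-invariant and the Donaldson--Futaki functional, for which I would use the filtration reformulation of Codogni--Patakfalvi.

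The genuinely hard direction is the converse, $X$ K-stable $\Rightarrow\delta(X)>1$, and this is where the main obstacle lies: a priori the infimum in \eqref{sthres} could equal $1$ without being attained, so $X$ might be K-semistable yet fail strict stability, leaving a gap between K-stable and $\delta>1$. The strategy, following Liu--Xu--Zhuang, is to upgrade the infimum to a minimum by producing a quasi-monomial valuation $v_\ast$ that computes $\delta(X)$. This existence result is the deepest input and the principal difficulty; it relies on the finite generation of the associated graded ring of the minimizing valuation, which guarantees that $v_\ast$ degenerates $X$ to a special test configuration. One then argues by contradiction: were $X$ K-stable with $\delta(X)=1$, the minimizer $v_\ast$ would furnish a nontrivial special test configuration of nonpositive Donaldson--Futaki invariant, contradicting strict K-stability; hence $\delta(X)>1$. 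I expect essentially all the technical weight of the theorem to sit in this finite-generation step, the earlier directions being formal consequences of the valuative dictionary.
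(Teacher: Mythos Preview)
The paper does not supply a proof of this theorem at all: it is stated purely as a citation of \cite{LXZ22,Li17,FO18,Fuj19b,CP21,BJ20}, and the paper immediately moves on. So there is no ``paper's own proof'' to compare against; your proposal is instead a sketch of how the result is assembled from those references.

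As such a sketch, your outline is essentially correct and well-organized. The semistable equivalence via the Fujita--Li $\beta$-criterion together with the Blum--Jonsson identification of $\delta$ with the asymptotic basis-type threshold is exactly the standard route. The implication $\delta(X)>1\Rightarrow$ K-stable via uniform K-stability is also the right argument. For the hard converse you correctly isolate the Liu--Xu--Zhuang finite-generation theorem as the crucial input, and your contradiction argument is the right shape. One small sharpening: when $\delta(X)=1$ the minimizer $v_\ast$ has $\beta(v_\ast)=0$, so the resulting special test configuration has Donaldson--Futaki invariant \emph{equal} to zero, not merely nonpositive; to contradict K-stability you must also know this test configuration is not of product type, which follows because $v_\ast$ is a nontrivial valuation and the degeneration is to a genuinely different central fibre (this is part of the LXZ package, but worth flagging explicitly). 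With that caveat, your assembly of the cited inputs is sound.
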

There is also a local version of the stability threshold, which we focus on in this paper.

\begin{definition}\cite{AZ20}
	Let $X$ be a Fano variety with at most klt singularities and let $-K_X$ be its anticanonical divisor. Let $p$ be a closed point of $X$. We set
	\[
	\delta_p(X)=\inf_{E,p{\in} C_X(E)}\frac{A_{X}(E)}{S(-K_X;E)}
	\]
	where the infimum runs over all divisors $E$ over $X$ whose center contains $p$.
\end{definition}

Similar notions can be defined for $\N\times \N^r$-graded linear series with bounded support on $X$. Details can be found in  \cite[Section 3]{AZ20}, but here, we just recall the results we use.

\vskip 0.3cm

Let us fix a klt pair $(X,\Delta)$, some Cartier divisors $L_1,..., L_r$ on $X$ and an $\N^r$-graded linear series $V_{\overrightarrow{\bullet}}$ associated to the $L_i$'s such that $V_{\overrightarrow{\bullet}}$ contains an ample series and has bounded support. Let $F$ be a primitive divisor over $X$ with associated prime blowup $\pi: Y\to X$. Assume that $F$ is either Cartier on $Y$ or plt type and let $W_{\overrightarrow{\bullet}}$ be the refinement of $V_{\overrightarrow{\bullet}}$ by $F$.

\begin{theorem}\label{Theo 3.3} \cite[Theorem 3.3]{AZ20}
	Let $F$ be a primitive divisor over $X$ with associated prime blowup $\pi: Y\to X$. Let $Z\subset X$ be a subvariety, and let $Z_0$ be an irreducible component of $Z\cap C_X(F)$. Let $\Delta_Y$ be the strict transform of $\Delta$ on $Y$ (but remove the component $F$ as in Definition \ref{Def 2.6}) and let $\Delta_F=\Diff_F(\Delta_Y)$ be the different so that $(K_Y+\Delta_Y+F)\vert_F=K_F+\Delta_F$. Then we have
	\[
	\delta_Z(X,\Delta; V_{\overrightarrow{\bullet}})\geq \min \left\lbrace \frac{A_{X,\Delta}(F)}{S(V_{\overrightarrow{\bullet}};F)}, \inf_{Z'} \delta_{Z'}(F,\Delta_F;W_{\overrightarrow{\bullet}})\right\rbrace
	\]
	where $Z\subseteq C_X(F)$ and otherwise
	\[
	\delta_Z(X,\Delta; V_{\overrightarrow{\bullet}})\geq \inf_{Z'} \delta_{Z'}(F,\Delta_F;W_{\overrightarrow{\bullet}}),
	\]
	where the infimums run over all subvarieties $Z'\subseteq Y$ such that $\pi(Z')=Z_0$.
\end{theorem}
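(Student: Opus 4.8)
The plan is to reduce to a pointwise estimate on valuations and then propagate it through the refinement by $F$. By the valuative description underlying the $\N^r$-graded theory of \cite{AZ20}, $\delta_Z(X,\Delta;V_{\overrightarrow{\bullet}})$ is the infimum of the ratios $A_{X,\Delta}(v)/S(V_{\overrightarrow{\bullet}};v)$ over all valuations $v$ over $X$ with $Z\subseteq C_X(v)$, so it suffices to bound each such ratio from below by one of the two quantities on the right. I would pull $v$ back to the associated prime blowup $\pi\colon Y\to X$ and record $s:=v(F)\ge 0$, the weight of $v$ along $F$. The two regimes in the statement correspond exactly to the sign of $s$: since $s>0$ forces $C_Y(v)\subseteq F$ and hence $Z\subseteq C_X(v)\subseteq C_X(F)$, the valuation $\ord_F$ (the purely vertical direction) is an admissible competitor precisely when $Z\subseteq C_X(F)$, and this is what contributes the term $A_{X,\Delta}(F)/S(V_{\overrightarrow{\bullet}};F)$; when $Z\not\subseteq C_X(F)$ only valuations with $s=0$ can satisfy $Z\subseteq C_X(v)$, and these see $F$ only through their restriction.

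The second ingredient is additivity of the log discrepancy under adjunction. Writing $w$ for the valuation induced on $F$ by the restriction of $v$, whose center $Z'$ on a model of $F$ satisfies $\pi(Z')=Z_0$, I would use $A_{X,\Delta}(v)=A_{Y,\Delta_Y+F}(v)$ together with the different identity $(K_Y+\Delta_Y+F)\vert_F=K_F+\Delta_F$ — which is precisely why $\Delta_F=\Diff_F(\Delta_Y)$ is introduced — to obtain
\[
A_{X,\Delta}(v)\ \ge\ s\,A_{X,\Delta}(F)+A_{F,\Delta_F}(w),
\]
with equality for the quasi-monomial valuation that is monomial in $\ord_F$ and $w$. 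The plt-type hypothesis on $F$ guarantees that $\Delta_F$ is a genuine boundary and that adjunction is valid in a neighbourhood of $F$.

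The crux is a matching upper bound for the expected vanishing order, and here the refinement $W_{\overrightarrow{\bullet}}$ is engineered to do the work. The idea is to compute $S(V_{\overrightarrow{\bullet}};v)$ by integrating $\vol$ along the flag determined by $F$ first and then along $F$: filtering each graded piece $V_{\vec a}$ by $\ord_F$ and restricting to $F$ produces exactly $W_{\overrightarrow{\bullet}}$, so that the Okounkov-body integral defining $S(V_{\overrightarrow{\bullet}};v)$ factors into an $F$-direction average, governed by $S(V_{\overrightarrow{\bullet}};F)$, and an along-$F$ average, governed by $S(W_{\overrightarrow{\bullet}};w)$. I expect an estimate of the shape
\[
S(V_{\overrightarrow{\bullet}};v)\ \le\ s\,S(V_{\overrightarrow{\bullet}};F)+S(W_{\overrightarrow{\bullet}};w).
\]
Combining the two splittings with the elementary mediant inequality
\[
\frac{\alpha_1+\alpha_2}{\beta_1+\beta_2}\ \ge\ \min\left\{\frac{\alpha_1}{\beta_1},\ \frac{\alpha_2}{\beta_2}\right\}
\]
(applied to $\alpha_1=s\,A_{X,\Delta}(F)$, $\beta_1=s\,S(V_{\overrightarrow{\bullet}};F)$, $\alpha_2=A_{F,\Delta_F}(w)$, $\beta_2=S(W_{\overrightarrow{\bullet}};w)$) yields
\[
\frac{A_{X,\Delta}(v)}{S(V_{\overrightarrow{\bullet}};v)}\ \ge\ \min\left\{\frac{A_{X,\Delta}(F)}{S(V_{\overrightarrow{\bullet}};F)},\ \frac{A_{F,\Delta_F}(w)}{S(W_{\overrightarrow{\bullet}};w)}\right\}.
\]
Taking the infimum over admissible $v$, hence over the induced $w$ and over components $Z'$ with $\pi(Z')=Z_0$, gives the bound; in the regime $Z\not\subseteq C_X(F)$ only the $s=0$ terms survive and the first slot of the minimum drops out, leaving the second inequality of the statement.

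The main obstacle is making the $S$-inequality rigorous. This requires the Okounkov-body and Fujita-approximation machinery for $\N^r$-graded linear series in order to justify that refinement by $F$ computes the correct restricted volumes, and it requires verifying that $W_{\overrightarrow{\bullet}}$ again contains an ample series and has bounded support, so that $S(W_{\overrightarrow{\bullet}};\cdot)$ and $\delta_{Z'}(F,\Delta_F;W_{\overrightarrow{\bullet}})$ are well defined. The remaining points — approximating a general valuation with $Z\subseteq C_X(v)$ by the quasi-monomial model in the $(\ord_F,w)$-plane, and checking that the center bookkeeping delivers exactly $\pi(Z')=Z_0$ — I expect to be routine once the volume decomposition is established.
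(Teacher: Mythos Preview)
The paper does not prove this theorem; it is quoted verbatim from \cite[Theorem 3.3]{AZ20} and used as a black box (see the sentence immediately following the statement and the way it feeds into Theorem~\ref{Kento form}). So there is no in-paper proof to compare against.

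That said, your sketch is a faithful outline of the argument in \cite{AZ20}. The three pillars --- the adjunction splitting $A_{X,\Delta}(v)\ge s\,A_{X,\Delta}(F)+A_{F,\Delta_F}(w)$, the $S$-splitting $S(V_{\overrightarrow{\bullet}};v)\le s\,S(V_{\overrightarrow{\bullet}};F)+S(W_{\overrightarrow{\bullet}};w)$ coming from the refinement, and the mediant inequality to recombine --- are exactly the mechanism of Abban--Zhuang. You have also correctly isolated where the real work lies: the $S$-inequality requires the Okounkov-body/concave-transform machinery for multigraded linear series developed in \cite[\S 2--3]{AZ20}, together with the verification that the refinement $W_{\overrightarrow{\bullet}}$ again contains an ample series and has bounded support. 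One small point: rather than approximating a general $v$ by a quasi-monomial valuation in the $(\ord_F,w)$-plane, the original argument works directly with divisorial valuations and the filtration they induce; the quasi-monomial picture is helpful intuition but not strictly how the inequality is established.
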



{The following Theorem is a direct consequence of  \cite[Theorem 3.2]{AZ20} and a simplification of \cite[Remark]{acc2021} in the surface case. It allows us to simplify the problem of finding a lower bound for the local stability threshold, using local stability thresholds of lower dimensional varieties.}


	

\begin{theorem}\label{Kento form}
Let $p$ be a point in {a del Pezzo surface $X$ with at most klt singularities}, let $\pi: Y\to X$ be a plt blowup of the point $p$, and let $E$ be the $\pi$-exceptional divisor. Then $(Y,E)$ has purely log terminal singularities, so that there exists an effective divisor $\Delta_E$ defined by $K_E+\Delta_E\sim_\Q(K_Y+E)\vert_E$. For every $t\in \left[ 0, \tau(-K_X;E)\right]$, where $\tau(-K_X;E)$ denotes the pseudo-effective threshold as in Definition \ref{basic def k-sta}, let us denote by $P(t)$ the positive part of the Zariski decomposition of the divisor $\pi^*(-K_X)-tE$, and let us denote by $N(t)$ its negative part. Let $W^E_{\cdot,\cdot}$ be {the refinement of $-K_X$ by $E$ (see \cite[\S 2.4]{AZ20} for the definition)}. Then
\[
\delta_p(X)\geq \min \left\{\frac{A_X(E)}{S(-K_X;E)},\; \min_{x\in E} \frac{1-\ord_x(\Delta_{E})}{S(W_{\cdot, \cdot}^{E}; x)}\right\}
 \]
 where for every $x\in E$ we have
 \[
 S(W_{\cdot, \cdot}^{E}; x)=\frac{2}{(-K_X)^2}\int_0^{\tau(-K_X;E)}h(t)\; dt
 \]
 where
 \[
 \begin{split}
      h(t) & =\left(P(t)\cdot E\right)\cdot \ord_x\left(N(t)\vert_E\right)+\int_0^\infty \vol_E\left( P(t)\vert_E-vx\right)\; dv\\
      &=\left(P(t)\cdot E\right)\times \left(N(t)\cdot E\right)_x+\int_0^{P(t)\cdot E} \left( P(t)\cdot E-v\right)\; dv\\
      & = \left(P(t)\cdot E\right)\times \left(N(t)\cdot E\right)_x+\frac{\left(P(t)\cdot E\right)^2}{2}
 \end{split}
 \]
\end{theorem}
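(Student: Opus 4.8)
The plan is to obtain the inequality as the two-dimensional specialization of Theorem \ref{Theo 3.3}, applied with $\Delta=0$, with $V_{\overrightarrow{\bullet}}$ the complete graded linear series of $-K_X$, with the primitive divisor $F$ taken to be the plt-exceptional divisor $E$ (which is of plt type since $\pi$ is a plt blowup, so the hypotheses of Theorem \ref{Theo 3.3} hold), and with center $Z=Z_0=\{p\}$. First I would dispose of the preliminary claims about $\Delta_E$: because $\pi\colon Y\to X$ is a plt blowup of $p$, the divisor $E$ is of plt type by Definition \ref{Def 2.6}(ii), so $(Y,E)$ is plt in a neighbourhood of $E$; in particular $E$ is normal, hence a smooth projective curve, and adjunction for the plt pair $(Y,E)$ produces an effective $\Q$-divisor $\Delta_E=\Diff_E(0)$ on $E$, with coefficients in $[0,1)$, such that $K_E+\Delta_E\sim_\Q(K_Y+E)|_E$.

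Applying Theorem \ref{Theo 3.3} with these choices, and using that $C_X(E)=\{p\}=Z$, that $\delta_{\{p\}}(X,0;V_{\overrightarrow{\bullet}})=\delta_p(X)$, and that $S(V_{\overrightarrow{\bullet}};E)=S(-K_X;E)$, we get
\[
\delta_p(X)\ \geq\ \min\left\{\frac{A_X(E)}{S(-K_X;E)},\ \inf_{Z'}\delta_{Z'}\bigl(E,\Delta_E;W^E_{\cdot,\cdot}\bigr)\right\},
\]
where $W^E_{\cdot,\cdot}$ is the refinement of $-K_X$ by $E$ and $Z'$ runs over the subvarieties of $Y$ mapping onto $p$, that is, over $E$ and over the closed points $x\in E$. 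To replace this inner infimum by $\min_{x\in E}$, I would use that $E$ is a smooth curve, so the only divisorial valuations of its function field are orders of vanishing at closed points; thus $\delta(E,\Delta_E;W^E_{\cdot,\cdot})=\inf_{x\in E}\delta_x(E,\Delta_E;W^E_{\cdot,\cdot})$, while for each $x$ the only valuation centred at $x$ is $\ord_x$, so $\delta_x(E,\Delta_E;W^E_{\cdot,\cdot})=\frac{A_{E,\Delta_E}(x)}{S(W^E_{\cdot,\cdot};x)}=\frac{1-\ord_x(\Delta_E)}{S(W^E_{\cdot,\cdot};x)}$. Since $\Delta_E$ is supported at finitely many points and $x\mapsto S(W^E_{\cdot,\cdot};x)$ takes only finitely many values (it depends on $x$ only through the numbers $\ord_x(N(t)|_E)$), the infimum over $x\in E$ is attained.

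The substantive step is the evaluation of $S(W^E_{\cdot,\cdot};x)$. Here I would unwind the definition of the refinement $W^E_{\cdot,\cdot}$ from \cite[\S 2.4]{AZ20} and specialize the integral formula of \cite[Theorem 3.2]{AZ20} to the surface setting. For $t\in[0,\tau(-K_X;E)]$, the class $\pi^*(-K_X)-tE$ is pseudoeffective and admits a Zariski decomposition $\pi^*(-K_X)-tE=P(t)+N(t)$ on the normal projective surface $Y$; the generic graded piece of the refinement at level $t$ has volume $P(t)^2$, and its restriction to the curve $E$ has moving part $P(t)|_E$, of degree $P(t)\cdot E$, and fixed part $N(t)|_E$ (here $E\not\subseteq\Supp N(t)$, so this restriction is the ordinary one). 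This gives the first expression $h(t)=(P(t)\cdot E)\cdot\ord_x(N(t)|_E)+\int_0^\infty\vol_E(P(t)|_E-vx)\,dv$; the other two are purely formal, since $\ord_x(N(t)|_E)=(N(t)\cdot E)_x$ by definition of local intersection multiplicity, and on a curve $\vol_E(P(t)|_E-vx)=\max\{0,\,P(t)\cdot E-v\}$, whence $\int_0^\infty\vol_E(P(t)|_E-vx)\,dv=\int_0^{P(t)\cdot E}(P(t)\cdot E-v)\,dv=\tfrac12(P(t)\cdot E)^2$. Finally $S(W^E_{\cdot,\cdot};x)=\frac{2}{(-K_X)^2}\int_0^{\tau(-K_X;E)}h(t)\,dt$ is the $n=2$ case of the Abban--Zhuang formula, with $\vol(-K_X)=(-K_X)^2$.

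The main obstacle I anticipate is exactly this last translation: matching the abstract description of $S(W^E_{\cdot,\cdot};x)$ — given through Newton--Okounkov bodies of the $\N^2$-graded refined series — with the hands-on surface picture coming from the Zariski decomposition, i.e.\ verifying that the positive part $P(t)$ accounts for the moving contribution $\int_0^\infty\vol_E(P(t)|_E-vx)\,dv$ while the negative part enters only through the shift $\ord_x(N(t)|_E)$, pinning down the constant $\tfrac{2}{(-K_X)^2}$, and checking that $h(t)$ is piecewise polynomial in $t$ (hence integrable) as $t$ crosses the finitely many walls where $\Supp N(t)$ changes. The other ingredients — the adjunction computation of $\Delta_E$, the existence of the Zariski decomposition, and the identification of divisorial valuations on a smooth curve with its closed points — are standard.
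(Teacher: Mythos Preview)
Your proposal is correct and follows essentially the same route as the paper: the paper does not give a detailed argument but states the theorem as ``a direct consequence of \cite[Theorem 3.2]{AZ20} and a simplification of \cite[Remark]{acc2021} in the surface case,'' and your plan is precisely to make this specialization explicit, first applying Theorem~\ref{Theo 3.3} (i.e.\ \cite[Theorem 3.3]{AZ20}) to obtain the inequality and then unwinding the refinement formula from \cite[Theorem 3.2]{AZ20} on the curve $E$ to produce the expression for $S(W^E_{\cdot,\cdot};x)$. Your handling of the curve case (identifying divisorial valuations with points, computing $A_{E,\Delta_E}(x)=1-\ord_x(\Delta_E)$, and evaluating $\vol_E(P(t)|_E-vx)$) is exactly the intended simplification.
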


{ This result gives a concrete recipe for what the flag of subvarieties of $X$ is, which is a Mori Dream space \cite{HK00}, and we apply it in Theorem \ref{> ineq}. Moreover, it} is one of the key points for the proof of the Main Theorem \ref{maintheo}. 

\section{Del Pezzo surfaces of degree 2}\label{dP sect}
As we mentioned in the introduction, the objects of study in this paper are the del Pezzo surfaces of degree 2. These surfaces are contained in weighted projective spaces {and are defined as follows:}

	

\begin{definition}
	A \textit{del Pezzo surface} or \textit{Fano surface} is a 2-dimensional Fano variety, i.e. a non-singular projective algebraic surface with ample anticanonical divisor class, $\omega_X^{-1}=-K_X$.

	The \textit{degree} $d$ of a del Pezzo surface X is given by the self-intersection of the anticanonical divisor, $(-K_X)^2=d$.
\end{definition}

\begin{remark}
{All of them but $\PP^1\times \PP^1$} can be represented as the blowup of $\PP^2$ at $r$ points in general position. In these cases, $d=9-r$.
\end{remark}

From now on, we focus on the case of del Pezzo surfaces of degree 2, which is represented by $X$. 

$X$ can be realized as the surface in weighted projective space $\PP(1,1,1,2)$ with homogeneous coordinates $x$, $y$, $z$, $w$, given by the equation
\begin{equation}\label{delPezzo2}
	w^2+wG_2(x,y,z)+G_4(x,y,z)=0,
\end{equation}
where $G_2(x,y,z)$ and $G_4(x,y,z)$ are weighted homogeneous polynomials of degrees 2 and 4, respectively. When we consider $X$ {over $\C$}, we may assume $G_2(x,y,z)=0$. Therefore, $X$ is given  by the equation:
\begin{equation}\label{dp2simply}
    w^2+G_4(x,y,z)=0.
\end{equation}
Notice that there exists $\rho:X\to \PP^2$ a double cover of $\PP^2$ ramified over a (smooth) quartic curve {$R$ \cite{Dol12}}, which is a canonical model of a curve of genus 3. 
\subsection{Cases covered in \cite{acc2021}}

In the proof of \cite[Lemma 2.15]{acc2021}, we can find a classification of the points in a smooth del Pezzo surface of degree 2, i.e. if {a point} $p\in X$, it satisfies one of the following:
\begin{itemize}
    \item[(1)] If we do an ordinary blowup of $p$ we get a smooth del Pezzo surface of degree 1;
    \item[(2)] $\rho(p)\in R$, and $C_p$ is an irreducible nodal curve;
    \item[(3)] $\rho(p)\in R$, and $C_p$ is an irreducible cuspidal curve;
    \item[(4)] $\rho(p)\in R$, and $C_p$ is a union of two (-1)-curves that meet transversally;
    \item[(5)] $\rho(p)\in R$, and $C_p$ is a union of two (-1)-curves that are tangent at $p$;
    \item[(6)] $\rho(p)\notin R$, and $p$ is contained in exactly one (-1)-curve;
    \item[(7)] $\rho(p)\notin R$, and $p$ is contained in two (-1)-curves;
    \item[(8)] $\rho(p)\notin R$, and $p$ is contained in three (-1)-curves;
    \item[(9)] the point $p$ is a generalized Eckardt point.
\end{itemize}
where $C_p$ is the unique curve in $\vert -K_X\vert$ that is singular at $p$. And in each case $\delta_p$ is computed to be:
\[
\delta_p(X)=\left\{\begin{array}{ll}
    {36}/{17} &  \text{ if (1)}\\
    2 &  \text{ if (2)}\\
   {15}/{8} &  \text{ if (3)}\\
    2 &  \text{ if (4)}\\
   {9}/{5} &  \text{ if (5)}\\
    {48}/{23} &  \text{ if (7)}\\
    {72}/{35} &  \text{ if (8)}\\
    2 &  \text{ if (9)}.
\end{array} \right.
\]
Notice that case (6) is left, when $p$ is contained in a unique $(-1)$-curve. However, the book \cite{acc2021} gives a bound for it, so we know that $\frac{40}{19}\geq \delta_p(X)\geq \frac{60}{31}$. Therefore, we focus on computing $\delta_p(X)$ for this case. 
It follows directly from equation \eqref{dp2simply}, that if $\rho(p)\notin R$ then the last coordinate of $p$ is non-zero.

\section{Weighted blowup of $X$}

In this section, we introduce some essential technical results for proving Theorem \ref{maintheo}. Let $\pi_{a,b}:X_{a,b}\to X$ be the $(a,b)$-weighted blowup of a del Pezzo surface of degree 2 at {a point $p_0\in X$. As in Theorem \ref{maintheo},} we are assuming {that there is a unique (-1)-curve $L$ passing through the point $p_0$}. We write their equations in local coordinates $u$ and $v$ such as $L=\{ u=0\}$, and $\wt(u)=a$ and $\wt(v)=b$ after the weighted blowup.
\begin{figure}[H]
    \centering
    \label{fig:(a,b) blowup}
    \caption{$(a,b)$-weighted blowup.}
    \includegraphics[width=1.1\linewidth]{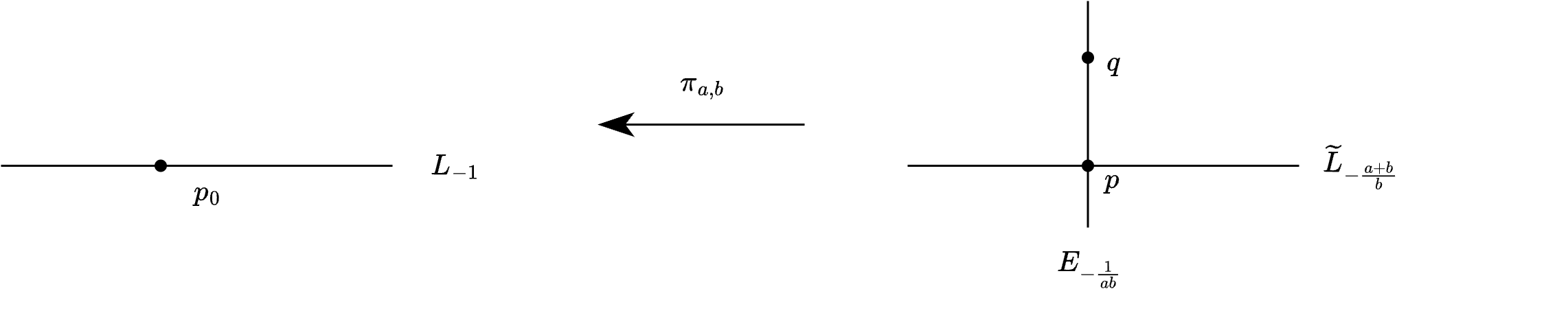}
\end{figure}
In the figure above, the subscripts represent the self-intersections, $E$ is the exceptional divisor of the weighted blowup, and {$\widetilde L$ is the strict transforms of $L$}.\\
In order to compute the Zariski Decomposition of the divisor $\pi_{a,b}^*(-K_X)-tE$, we write ${\pi_{a,b}^*}(-K_X)-tE$ as a non-negative combination of negative divisors {(See \cite[Theorem 2.3.19]{Laz04})}. {However, currently we do not have enough negative curves in the picture. To find a convenient expression, we search for another known algebraic variety that is birational to the weighted blowup. The idea is to find a linear equivalence in terms of the strict transform of $L$ in that space along with other negative curves, and bring it back to the weighted blowup $X_{a,b}$.}

\subsection{Notation}\label{not 2}
 As in Theorem $\ref{maintheo}$, let $p_0=(x_0,y_0,z_0,w_0)\in X$ be a closed point with $w_0\neq 0$. Let $X_{a,b}$ be the $(a,b)$-weighted blowup of $X$ at $p_0$. {Let $L$ be the unique (-1)-curve passing through $p_0$}. Let $E$ be the exceptional divisor of the weighted blowup. We assume $\frac{\sqrt{3}}{2}a\leq b\leq a$ and $\gcd(a,b)=1$. Therefore,  we can rewrite $a=b+\delta$ where $\delta\in \{1,...,b-1\}$. Similarly, we can rewrite $b=\delta\cdot i+\gamma_0$ and $\delta=\gamma_0\cdot j+\gamma_1$ where {$i,j\in \N$,} $\gamma_0\in\{0,1,...,\delta-1\}$ and $\gamma_1\in\{0,1,...,\gamma_0-1\}$. Generalizing this, let $\gamma_k=\gamma_{k+1}\cdot j_{k+1}+\gamma_{k+2}$ where {$j_{k+1}\in \N$} and $\gamma_{k+2}\in\{0,1,...,\gamma_{k+1}-1\}$. 

Since $\{\gamma_k\}$ is a decreasing sequence of natural numbers, there exists a $k_0\in \N$ where $\gamma_{k_0}=1$. Furthermore, we can choose $k_0$ such that for any other $k'\in \N$ where $\gamma_{k'}=1$, $k'\geq k_0$.

Let us denote $so_n=\sum_{m=1}^{n}j_{2m-1}$ and $se_n=\sum_{m=1}^{n}j_{2m}$ and for $n=0$, $so_0=se_0=0$.

\subsection{The resolution of $X_{a,b}$ followed by contractions to a weak degree 1 del Pezzo} \label{Finding D}

An $(a,b)$-weighted blowup of a smooth surface has at most two quotient singularities $p$ and $q$. If $a\neq 1$, $q$ is a $\frac{1}{a}(1,c_1)$ singularity and if $b\neq 1$, $p$ is a $\frac{1}{b}(1,d_1)$ singularity, where $c_1=-b+n_1 a$ and $d_1=-a+m_1 b$, with $n_1=\left\lceil \frac{b}{a}\right\rceil=1$ and $m_1=\left\lceil \frac{a}{b}\right\rceil=2$, respectively. \\
Similarly, we define the following:
\begin{itemize}
    \item $c_2=-a+n_2 c_1$, where $n_2=\left\lceil \frac{a}{c_1}\right\rceil$ and $c_k=-c_{k-2}+n_k c_{k-1}$ where $n_k=\left\lceil \frac{c_{k-2}}{c_{k-1}}\right\rceil$.
    \item $d_2=-b+m_2 d_1$, where $m_2=\left\lceil \frac{b}{d_1}\right\rceil$ and $d_k=-d_{k-2}+m_k d_{k-1}$ where $m_k=\left\lceil \frac{d_{k-2}}{d_{k-1}}\right\rceil$.
\end{itemize}

The resolution of these singularities is analogous to the Euclidean algorithm. The resolution of the singularity $p$ is achieved for some $i_0\in \N$ such that $d_{i_0}=1$ and similarly for $q$, the resolution will be complete when for some $j_0\in \N$ we get $c_{j_0}=1$.

\begin{remark}
Each $d_k$ can be represented as $d_k=\mu_k b+\lambda_k a$. Similarly, each $c_k$ can be represented as $c_k=\beta_k b+\alpha_k a$. 
Notice that the property $\bullet_k=-\bullet_{k-2}+m_k\cdot\bullet_{k-1}$ also holds for the coefficients $\mu_k$ and $\lambda_k$, and  $\bullet_k=-\bullet_{k-2}+n_k\cdot \bullet_{k-1}$ holds for $\alpha_k$ and $\beta_k$.
\end{remark}

From now on, we assume neither $a$ nor $b$ are equal to $1$. If this is not the case, it is enough to omit the singularity that corresponds to that weight. We present an algorithm to determine the resolution of $X_{a,b}$. It is achieved by repeatedly blowing up the singularities.

\paragraph{Resolution of $p$.} Let $\sigma_1:\hat{X}_{a,b}\to X_{a,b}$ be the {weighted} blowup of $X_{a,b}$  at {the quotient singular point $p$} {with suitable (natural) weight}. Let $E^{(1)}$ be the exceptional divisor and let $\hat{L}$ and $\hat{E}$ be the strict transforms of $\widetilde{L}$ and $E$ respectively. Since {$p$} is a $\frac{1}{b}(1,d_1)$ {quotient} singularity, $(E^{(1)})^2=-\frac{b}{d_1}$, $\sigma_1^*(\widetilde{L})\sim\hat{L}+\frac{d_1}{b}E^{(1)}$, and $\sigma_1^*(E)\sim\hat{E}+\frac{1}{b}E^{(1)}$. Therefore, we get the following self-intersections:
\[
\begin{aligned}
& (\hat{L})^2=\left(\sigma_1^*(\widetilde{L})-\frac{d_1}{b}E^{(1)}\right)^2=(\sigma_1^*(\widetilde{L}))^2+\left(\frac{d_1}{b}E^{(1)}\right)^2=-\frac{a+b}{b}-\frac{d_1}{b}=-(1+m_1)\\
& (\hat{E})^2=\left(\sigma_1^*({E})-\frac{1}{b}E^{(1)}\right)^2=(\sigma_1^*(E))^2+\left(\frac{1}{b}E^{(1)}\right)^2=-\frac{1}{ab}-\frac{1}{bd_1}=-\frac{m_1}{a d_1}= -\frac{\mu_1}{a d_1}.
\end{aligned}
\]

After this new blowup, if $d_1=1$, $\hat{X}_{a,b}$ will only have a singularity left and we continue with the resolution of $q$. On the other hand, if $d_1\neq 1$, $\hat{X}_{a,b}$ has a singularity of type $\frac{1}{d_1}(1,d_2)$ (as well as $q$) in the intersection of $E^{(1)}$ with $\hat{E}$ and we iterate the process by blowing it up. Denote this new blowup by $\sigma_2:\dot{X}_{a,b}\to\hat{X}_{a,b}$. Let $E^{(2)}$ the exceptional divisor of the $\sigma_2$ blowup, where $(E^{(2)})^2=-\frac{d_1}{d_2}$. Let $\dot{E}^{(1)}$ and $\dot{E}$ be the strict transforms of $E^{(1)}$ and $\hat{E}$, respectively. We get the following self-intersections:
\[
\begin{aligned}
& (\dot{E}^{(1)})^2 =\left(\sigma_2^*(E^{(1)})-\frac{d_2}{d_1}E^{(2)}\right)^2=(\sigma_2^*(E^{(1)}))^2+\left(\frac{d_2}{d_1}E^{(2)}\right)^2=-\frac{b}{d_1}-\frac{d_2}{d_1}=-m_2\\
& (\dot{E})^2 =\left(\sigma_1^*(\hat{E})-\frac{1}{d_1}E^{(2)}\right)^2=(\sigma_1^*(\hat{E}))^2+\left(\frac{1}{d_1}E^{(2)}\right)^2=-\frac{m_1}{a d_1}-\frac{1}{d_1 d_2}\\
&  \;\;\; \; \;\;\;=-\frac{(m_1 m_2-1) d_1}{a d_1 d_2}=-\frac{m_1 m_2-1}{a d_2}=-\frac{\mu_2}{a d_2}.
\end{aligned}
\]

Since $\gcd(a,b)=1$, by the Euclidean algorithm there exists an $i_0\in \N$ such that $d_{i_0}=1$. Therefore, the resolution of $p$ is achieved after $i_0$ steps. Let us denote $f_p:\Check{X}_{a,b}\to X_{a,b}$ the resolution of $p$ defined as $f_p=\sigma_{1}\circ\sigma_{2}\circ\cdots\circ\sigma_{i_0-1}\circ\sigma_{i_0}$. For each of these blowups {with suitable (natural) weight} of a quotient singular point, let us denote by $E^{(k)}$ the exceptional divisor of $\sigma_k$. Let $\Check{E}^{(k)}$, $\Check{E}$, $\Check{L}$, and $\Check{C}$ be the strict transforms of our divisors at the $i_0$-th blowup. Then, we have the following self-intersections:
\[
\begin{aligned}
& (\Check{E}^{(k)})^2=-m_{k+1}\text{ for } k=1,...,i_0-1, \;  \;\;\;(E^{(i_0)})^2=-d_{i_0-1}\\
&   (\Check{E})^2=-\frac{\mu_{i_0}}{a},\;\;\;\; (\Check{L})^2=-(1+m_1)=-3, \;\;\;\; (\Check{C})^2=3-\frac{b}{a}.
\end{aligned}
\]
 \begin{lemma}
  {$(\Check{E})^2=-\frac{\mu_{i_0}}{a}$.}
 \end{lemma}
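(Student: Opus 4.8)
The plan is to prove the formula $(\Check{E})^2 = -\frac{\mu_{i_0}}{a}$ by induction on the number of blowups $k$ performed in the resolution $f_p$ of the singular point $p$, tracking the self-intersection of the strict transform of $E$ at each stage. This is essentially a bookkeeping argument that mirrors the Euclidean algorithm already set up in Section \ref{Finding D}, so the main work is organizing the recursion cleanly rather than introducing any new geometry.

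\textbf{Setup of the induction.} After the $k$-th blowup $\sigma_k$ (for $1 \le k \le i_0$), let $E^{[k]}$ denote the strict transform of $E$ on the surface obtained so far, so that $E^{[0]} = \widetilde{L}$-side data gives $E^{[0]} = E$ with $E^2 = -\frac{1}{ab}$, and $E^{[i_0]} = \Check{E}$. The two computations displayed in the excerpt already establish the base cases: after $\sigma_1$ we have $(\hat{E})^2 = -\frac{m_1}{a d_1} = -\frac{\mu_1}{a d_1}$, and after $\sigma_2$ we have $(\dot{E})^2 = -\frac{m_1 m_2 - 1}{a d_2} = -\frac{\mu_2}{a d_2}$. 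The induction hypothesis I would carry is
\[
(E^{[k]})^2 = -\frac{\mu_k}{a\, d_k} \quad \text{for } k = 1, \dots, i_0,
\]
where $\mu_k$ is the coefficient appearing in $d_k = \mu_k b + \lambda_k a$, satisfying the recursion $\mu_k = -\mu_{k-2} + m_k \mu_{k-1}$ noted in the Remark (with $\mu_0 = 1$, $\mu_1 = m_1$, consistent with $d_0 = b$, $d_1 = -a + m_1 b$). When $k = i_0$ we have $d_{i_0} = 1$, which immediately gives $(\Check{E})^2 = -\frac{\mu_{i_0}}{a}$, the claimed statement.

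\textbf{The inductive step.} Assume $(E^{[k-1]})^2 = -\frac{\mu_{k-1}}{a\, d_{k-1}}$. The blowup $\sigma_k$ resolves a $\frac{1}{d_{k-1}}(1, d_k)$ quotient singularity lying on $E^{[k-1]}$ (the intersection point of $E^{[k-1]}$ with the previous exceptional curve $E^{(k-1)}$), with exceptional divisor $E^{(k)}$ satisfying $(E^{(k)})^2 = -\frac{d_{k-1}}{d_k}$ and pullback relation $\sigma_k^*(E^{[k-1]}) \sim E^{[k]} + \frac{1}{d_{k-1}} E^{(k)}$, exactly as in the $k=2$ case worked out above. Then
\[
(E^{[k]})^2 = \left(\sigma_k^*(E^{[k-1]})\right)^2 + \left(\tfrac{1}{d_{k-1}} E^{(k)}\right)^2 = -\frac{\mu_{k-1}}{a\, d_{k-1}} - \frac{1}{d_{k-1} d_k} = -\frac{\mu_{k-1} d_k + d_{k-1}}{a\, d_{k-1} d_k}.
\]
I then need the numerical identity $\mu_{k-1} d_k + d_{k-1} = \mu_k d_{k-1}$, i.e. $\mu_k = \mu_{k-1}\frac{d_k}{d_{k-1}} + \frac{1}{d_{k-1}}$; after clearing, this reduces to $\mu_k d_{k-1} - \mu_{k-1} d_k = d_{k-1}/d_{k-1} \cdot (\text{something})$ — more precisely it follows from the coupled recursions $d_k = -d_{k-2} + m_k d_{k-1}$ and $\mu_k = -\mu_{k-2} + m_k \mu_{k-1}$ together with the "Wronskian" relation $\mu_k d_{k-1} - \mu_{k-1} d_k = \mu_{k-1} d_{k-2} - \mu_{k-2} d_{k-1} = \cdots = \mu_1 d_0 - \mu_0 d_1 = m_1 b - (m_1 b - a) = a$. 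Wait — tracking signs, this constant equals $\pm a$, and matching it against the base case $(\hat{E})^2 = -\mu_1/(ad_1)$ fixes the sign so that $\mu_{k-1}d_k + d_{k-1}$ simplifies to $\mu_k d_{k-1}$; substituting gives $(E^{[k]})^2 = -\frac{\mu_k d_{k-1}}{a d_{k-1} d_k} = -\frac{\mu_k}{a d_k}$, completing the step.

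\textbf{Main obstacle.} The only genuinely delicate point is establishing and correctly signing the cross-term identity $\mu_k d_{k-1} - \mu_{k-1} d_k = a$ (a constant of the "anti-Euclidean" recursion), and verifying that the pullback coefficient in $\sigma_k^*(E^{[k-1]}) \sim E^{[k]} + \frac{1}{d_{k-1}} E^{(k)}$ is indeed $\frac{1}{d_{k-1}}$ at every stage rather than changing with $k$ — this is exactly what the explicit $k=1,2$ computations in the text confirm, and it holds because $E^{[k-1]}$ always passes through the singular point with local multiplicity giving coefficient $1/d_{k-1}$ against the $\frac{1}{d_{k-1}}(1,d_k)$ singularity. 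Everything else is a routine induction using the recursions already recorded in Section \ref{Finding D} and the Remark on the coefficients $\mu_k, \lambda_k$.
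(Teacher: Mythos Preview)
Your approach is essentially identical to the paper's: induction on the number of blowups, reducing the step to the ``Wronskian'' identity $\mu_k d_{k-1} - \mu_{k-1} d_k = a$, which is constant along the recursion and checked at the base $\mu_1 d_0 - \mu_0 d_1 = a$. One arithmetic slip to fix: when you combine $-\frac{\mu_{k-1}}{a d_{k-1}} - \frac{1}{d_{k-1} d_k}$ over a common denominator the numerator is $\mu_{k-1} d_k + a$, not $\mu_{k-1} d_k + d_{k-1}$; with that correction the needed identity is exactly $\mu_k d_{k-1} - \mu_{k-1} d_k = a$, which you then state and prove correctly.
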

\begin{proof}
 By induction, assume it is true for the $k$-th blowup, $(E)^2=-\frac{\mu_k}{ad_k}$. Let us check that it holds for the ($k+1$)-th blowup.
 \[
 \begin{aligned}
 (E)^2 & =-\frac{\mu_k}{a d_k}-\frac{1}{d_k d_{k+1}}=-\frac{\mu_k(-d_{k-1}+m_{k+1}d_k)+a}{a d_k d_{k+1}} = -\frac{a-\mu_k d_{k-1}+ \mu_k m_{k+1}d_k}{a d_k d_{k+1}}.
 \end{aligned}
 \]
It is enough to show that $a-\mu_k d_{k-1}=-\mu_{k-1} d_k$. Notice that we know $a=\mu_{k-1} d_{k-2}-\mu_{k-2} d_{k-1}$.
 \[
 \mu_k d_{k-1}-\mu_{k-1} d_k=(-\mu_{k-2}+m_k\mu_{k-1}) d_{k-1}-\mu_{k-1} (-d_{k-2}+m_kd_{k-1})=\mu_{k-1} d_{k-2}-\mu_{k-2} d_{k-1}=a
 \]
 Therefore, we get $(E)^2=-\frac{\mu_{k+1}}{a d_{k+1}}$. In particular, since $d_{i_0}=1$, $(\Check{E})^2=-\frac{\mu_{i_0}}{a}$.
 \end{proof}

The values of $i_0$, $d_{i_0-1}$, and $m_k$ can be specified using the notation of Subsection \ref{not 2}. We have two possibilities: 
\begin{itemize} 
\item If $k_0=2n_0$,
\[
\begin{aligned}
& i_0= i+so_{n_0}\;\text{ and }\; \left(\Check{E}^{\left(i+so_{n}+k\right)}\right)^{2}=-m_{\left(i+so_{n}+k+1\right)}=\left\{\begin{array}{ll}
-\left(j_{2 n}+2\right) & \text { if } k=0, \\
-2 & \text { Otherwise. }
\end{array}\right. \\
& (\Check{E}^{(i_0)})^2=-d_{i_0-1}=-(j_{2n_0}+1).
\end{aligned}
\]
\item If $k_0=2n_0+1$,
\[
\begin{aligned}
& i_0= i+so_{n_0+1}-1\;\text{ and }\; \left(\Check{E}^{\left(i+so_{n}+k\right)}\right)^{2}=-m_{\left(i+so_{n}+k+1\right)}=\left\{\begin{array}{ll}
-\left(j_{2 n}+2\right) & \text { if } k=0, \\
-2 & \text { Otherwise. }
\end{array}\right. \\
& (\Check{E}^{(i_0)})^2=-d_{i_0-1}=-2.
\end{aligned}
\]
\end{itemize}

\paragraph{Resolution of $q$.} Let $\tau_1:\overset{\cdot\cdot}{X}_{a,b}\to\Check{X}_{a,b}$ be the{weighted} blowup of $\Check{X}_{a,b}$ at {the quotient singular point} $q$ {with suitable (natural) weight}. Let $F^{(1)}$ be the exceptional divisor and $\overset{\cdot\cdot}{L}$, $\overset{\cdot\cdot}{E^{(k)}}$ and $\overset{\cdot\cdot}{E}$ be the strict transforms of $\Check{L}$, $\Check{E}^{(k)}$ and $\Check{E}$ respectively. Since $q$ is a $\frac{1}{a}(1,c_1)$ {quotient singularity}, $(F^{(1)})^2=-\frac{a}{c_1}$. Also, {since $q$ is in $\Check{E}$, its strict  transforms is not isomorphic to the pullback: $\tau_1^*(\Check{E})\sim\overset{\cdot\cdot}{E}+\frac{1}{a}F^{(1)}$.} Therefore:
\[
\begin{aligned}
& (\overset{\cdot\cdot}{E})^2=\left(\tau_1^*(\Check{E})-\frac{1}{a}F^{(1)}\right)^2=(\tau_1^*(\Check{E}))^2+\left(\frac{1}{a}F^{(1)}\right)^2=-\frac{\mu_{i_0 }}{a}-\frac{1}{ac_1}=-\frac{n_1\mu_{i_0}+\lambda_{i_0}}{c_1}\\
&\;\;\;\;\;\;\;=-\frac{\alpha_1\mu_{i_0}-\beta_1\lambda_{i_0}}{c_1}.
\end{aligned}
\]

After this blowup, if $c_1=1$, $\overset{\cdot\cdot}{X}_{a,b}$ is smooth and it is the resolution. On the other hand, if $c_1\neq 1$, $\overset{\cdot\cdot}{X}_{a,b}$ has a singularity of type $\frac{1}{c_1}(1,c_2)$ in the intersection of $F^{(1)}$ and $\overset{\cdot\cdot}{E}$, and we iterate the process by blowing it up, with the blow up denoted by $\tau_2:\overset{\cdots}{X}_{a,b}\to\overset{\cdot\cdot}{X}_{a,b}$. Let $F^{(2)}$ the exceptional divisor of the $\tau_2$ blowup, where $(F^{(2)})^2=-\frac{c_1}{c_2}$. Let $\overset{\cdots}{F^{(1)}}$ and $\overset{\cdots}{E}$ be the strict transforms of $F^{(1)}$ and $\overset{\cdot \cdot}{E}$, respectively. We get the following self-intersections:
\[
\begin{aligned}
& \left(\overset{\cdots}{F}^{(1)}\right)^2=\left(\tau_2^*(F^{(1)})-\frac{c_2}{c_1}F^{(2)}\right)^2=(\tau_2^*(F^{(1)}))^2+\left(\frac{c_2}{c_1}F^{(2)}\right)^2=-\frac{a}{c_1}-\frac{c_1}{c_2}=-n_2 \\
& (\overset{\cdots}{E})^2=\left(\tau_1^*(\Check{E})-\frac{1}{a}F^{(1)}\right)^2=(\tau_1^*(\Check{E}))^2+\left(\frac{1}{c_1}F^{(1)}\right)^2=-\frac{\mu_{i_0 }}{c_1}-\frac{1}{ac_1}=-\frac{\alpha_1\mu_{i_0}-\beta_1\lambda_{i_0}}{c_1}-\frac{1}{c_1 c_2}\\
&\;\;\;\;\;\;\;\;=-\frac{\alpha_2\mu_{i_0}-\beta_2\lambda_{i_0}}{c_2}
\end{aligned}
\]

Since $\gcd(a,b)=1$ by the Euclidean algorithm there exist $j_0\in \N$ such that $c_{j_0}=1$. Therefore, the resolution of $q$ is achieved after $j_0$ steps. Let us denote $f_q:\overline{X}_{a,b}\to \Check{X}_{a,b}$ the resolution of $q$ defined as $f_q=\tau_{1}\circ\tau_{2}\circ\cdots\circ\tau_{j_0-1}\circ\tau_{j_0}$. For each of these blowups {with suitable (natural) weight} of a quotient singular point, let us denote by $F^{(k)}$ the exceptional divisor of $\tau_k$. Let $\overline{F}^{(k)}$, $\overline{E}^{(k)}$, $\overline{E}$, and $\overline{L}$ be the strict transforms of our divisors after the $(i_0+j_0)$-th blowup. Then, we have the following self-intersections:
\[
 (\overline{F}^{(k)})^2=-n_{k+1}\text{ for } k=1,...,j_0-1, \;  \;\;\;(\overline{F}^{(j_0)})^2=-c_{j_0-1},\;\;\;\;   (\overline{E})^2=-1.
\]

\begin{remark}
{Notice that the self-intersections of $\overline{E}^{(k)}$ and $\overline{L}$ are the same as their images through $f_q$. }
\end{remark}
\begin{lemma}
   {$(\overline{E})^2=-1$.}
\end{lemma}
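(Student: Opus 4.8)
The plan is to continue the induction already set up in the preceding lemma (the one computing $(\Check{E})^2 = -\mu_{i_0}/a$), carrying it through the second string of blowups $\tau_1,\dots,\tau_{j_0}$ that resolve the singularity $q$. Concretely, I would prove the inductive statement $(\overset{\cdots}{E})^2 = -\dfrac{\alpha_k\mu_{i_0} - \beta_k\lambda_{i_0}}{c_k}$ after the $k$-th blowup $\tau_k$ (this is exactly the shape already displayed for $k=1,2$ in the excerpt), and then specialise to $k = j_0$, where $c_{j_0} = 1$ by the Euclidean-algorithm termination.

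First I would record the base case $k=1$: starting from $(\Check{E})^2 = -\mu_{i_0}/a$ (the previous lemma) and $\tau_1^*(\Check{E}) \sim \overset{\cdot\cdot}{E} + \frac1a F^{(1)}$ with $(F^{(1)})^2 = -a/c_1$, one computes $(\overset{\cdot\cdot}{E})^2 = -\mu_{i_0}/a - 1/(ac_1) = -(n_1\mu_{i_0} + \lambda_{i_0})/c_1 = -(\alpha_1\mu_{i_0} - \beta_1\lambda_{i_0})/c_1$, using $c_1 = -b + n_1 a$ together with $\alpha_1, \beta_1$ being the coefficients in $c_1 = \beta_1 b + \alpha_1 a$, so $\alpha_1 = n_1$, $\beta_1 = -1$. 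For the inductive step, suppose $(\overset{\cdots}{E})^2 = -(\alpha_k\mu_{i_0} - \beta_k\lambda_{i_0})/c_k$ after $\tau_k$. The next blowup $\tau_{k+1}$ has exceptional divisor $F^{(k+1)}$ with $(F^{(k+1)})^2 = -c_k/c_{k+1}$, and the strict transform satisfies (the analogue of the displayed relation) pullback $\sim$ strict transform $+\frac{1}{c_k}F^{(k+1)}$, giving
\[
(\overset{\cdots}{E})^2 = -\frac{\alpha_k\mu_{i_0} - \beta_k\lambda_{i_0}}{c_k} - \frac{1}{c_k c_{k+1}} = -\frac{(\alpha_k\mu_{i_0} - \beta_k\lambda_{i_0})c_{k+1} + 1}{c_k c_{k+1}}.
\]
Then I would invoke the recursion $\bullet_{k+1} = -\bullet_{k-1} + n_{k+1}\bullet_k$ satisfied by $c_k$, $\alpha_k$, $\beta_k$ (stated in the Remark), exactly mirroring the algebra in the proof of the previous lemma: the key identity to check is $(\alpha_k\mu_{i_0} - \beta_k\lambda_{i_0})c_{k+1} + 1 = (\alpha_{k+1}\mu_{i_0} - \beta_{k+1}\lambda_{i_0})c_k$, which reduces — after expanding $c_{k+1}$, $\alpha_{k+1}$, $\beta_{k+1}$ via their common three-term recursion — to the relation $(\alpha_k\mu_{i_0} - \beta_k\lambda_{i_0})c_{k-1} - (\alpha_{k-1}\mu_{i_0} - \beta_{k-1}\lambda_{i_0})c_k = $ (constant), which one pins down from the $k=1$ value together with $\det\begin{pmatrix}\alpha_k & \beta_k \\ \alpha_{k-1} & \beta_{k-1}\end{pmatrix}$ being a constant times a Wronskian-type quantity that is preserved by the recursion. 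This is precisely the bookkeeping analogue of the identity $a - \mu_k d_{k-1} = -\mu_{k-1}d_k$ used earlier. Finally, setting $k = j_0$ and using $c_{j_0} = 1$, the formula collapses to $(\overline{E})^2 = -(\alpha_{j_0}\mu_{i_0} - \beta_{j_0}\lambda_{i_0})$, and I would close by showing the integer $\alpha_{j_0}\mu_{i_0} - \beta_{j_0}\lambda_{i_0}$ equals $1$; this should follow because $\mu_{i_0}, \lambda_{i_0}$ are the coefficients expressing $d_{i_0} = 1$ as $\mu_{i_0}b + \lambda_{i_0}a$ and $\alpha_{j_0}, \beta_{j_0}$ express $c_{j_0} = 1$ as $\beta_{j_0}b + \alpha_{j_0}a$, so both pairs are Bézout coefficients for $\gcd(a,b) = 1$, forcing the cross-combination (a $2\times2$ determinant of two unimodular-type relations) to be $\pm 1$, with the sign fixed by positivity/orientation of the resolution.

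The main obstacle I anticipate is not any single computation but keeping the two nested recursions ($c_k, n_k$ on one side; $\alpha_k, \beta_k$ on the other) and the fixed data $\mu_{i_0}, \lambda_{i_0}$ consistently aligned — in particular verifying that the "constant of the recursion'' (the quantity playing the role of $a$ in the earlier lemma's identity $\mu_k d_{k-1} - \mu_{k-1}d_k = a$) is indeed $1$ rather than some other factor, which is what makes $(\overline{E})^2$ come out to exactly $-1$. I would double-check this constant at the base case $k=1$ directly. As a sanity check, $(\overline{E})^2 = -1$ is exactly what one expects geometrically: $\overline{E}$ is the strict transform of the exceptional divisor of the original weighted blowup in a \emph{smooth} surface obtained by resolving, and the whole point of the construction (contracting down to a weak del Pezzo of degree $1$) forces this self-intersection, so the algebra must conspire to give $1$. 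If the purely combinatorial route proves too unwieldy, an alternative is to argue via the discrepancy/intersection-theoretic identity $\overline{E} = f^*E - (\text{correction supported on the }E^{(k)}, F^{(k)})$ and compute $(\overline{E})^2$ by projection formula using that $\overline{E}$ meets only one of the exceptional chains, but the direct induction above parallels the paper's own style and should be the cleanest.
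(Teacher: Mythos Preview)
Your plan is essentially identical to the paper's own proof: induction along the $\tau_k$'s to get $(E)^2=-\dfrac{\alpha_k\mu_{i_0}-\beta_k\lambda_{i_0}}{c_k}$, the three--term recursion reducing the inductive step to the constancy of the ``Wronskian'' $(\alpha_k\mu_{i_0}-\beta_k\lambda_{i_0})c_{k-1}-(\alpha_{k-1}\mu_{i_0}-\beta_{k-1}\lambda_{i_0})c_k$, and then a B\'ezout argument at $k=j_0$. Your base--case check for the Wronskian is exactly right: with $c_0=a$, $c_1=a-b$, $(\alpha_0,\beta_0)=(1,0)$, $(\alpha_1,\beta_1)=(1,-1)$ one gets $(\mu_{i_0}+\lambda_{i_0})a-\mu_{i_0}(a-b)=\mu_{i_0}b+\lambda_{i_0}a=d_{i_0}=1$.

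There is, however, one genuine gap in your final step. It is \emph{not} true that any two B\'ezout representations of $1=\gcd(a,b)$ force the cross--determinant to be $\pm1$: if $\alpha a+\beta b=1$ and $\lambda a+\mu b=1$, then $(\lambda,\mu)=(\alpha+kb,\beta-ka)$ for some $k\in\Z$, and one computes $\alpha\mu-\beta\lambda=-k$, which can be any integer. So ``both pairs are B\'ezout coefficients'' by itself does not yield $\pm1$; you must use the \emph{specific} pairs produced by the two Euclidean recursions. The paper does this explicitly: from $\alpha_{j_0}a+\beta_{j_0}b=1=\lambda_{i_0}a+\mu_{i_0}b$ it follows that $\mu_{i_0}-\beta_{j_0}$ is a multiple of $a$ and $\alpha_{j_0}-\lambda_{i_0}$ the same multiple of $b$; the size/sign constraints on continued--fraction convergents force that multiple to be exactly $1$, i.e.\ $\mu_{i_0}=\beta_{j_0}+a$ and $\lambda_{i_0}=\alpha_{j_0}-b$, whence
\[
\alpha_{j_0}\mu_{i_0}-\beta_{j_0}\lambda_{i_0}=\alpha_{j_0}(\beta_{j_0}+a)-\beta_{j_0}(\alpha_{j_0}-b)=\alpha_{j_0}a+\beta_{j_0}b=1.
\]
Once you replace the ``unimodular $\Rightarrow\pm1$'' heuristic with this explicit identification of the shift, your argument is complete and matches the paper's.
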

\begin{proof}
First we see that in the $(i_0+k)$-th blowup, $(E)^2=-\frac{\alpha_k\mu_{i_0}-\beta_k\lambda_{i_0}}{c_k}$. And then we see that $\alpha_{j_0}\mu_{i_0}-\beta_{j_0}\lambda_{i_0}=1$.
\\
As before, by induction on k, assume it is true for the $i_0+k$-th blowup. Then let us see that it also holds for the $k+1$-th.

\[
\begin{aligned}
(E)^2 & =-\frac{\alpha_k\mu_{i_0}-\beta_k\lambda_{i_0}}{c_k}-\frac{1}{c_k c_{k+1}}=-\frac{(\alpha_k\mu_{i_0}-\beta_k\lambda_{i_0})c_{k+1}+1}{c_k c_{k+1}}\\
 &=-\frac{(\alpha_k\mu_{i_0}-\beta_k\lambda_{i_0})(-c_{k-1}+n_{k+1}c_k)+1}{c_k c_{k+1}} \\
& =-\frac{1-(\alpha_k\mu_{i_0}-\beta_k\lambda_{i_0})c_{k-1}+(\alpha_k\mu_{i_0}-\beta_k\lambda_{i_0})n_{k+1}c_k}{c_k c_{k+1}}
\end{aligned}
\]
So it is enough to prove that $1-(\alpha_k\mu_{i_0}-\beta_k\lambda_{i_0})c_{k-1}=(\alpha_{k-1}\mu_{i_0}-\beta_{k-1}\lambda_{i_0})c_{k}$. 
\[
\begin{aligned}
& (\alpha_k\mu_{i_0}-\beta_k\lambda_{i_0})c_{k-1}+(\alpha_{k-1}\mu_{i_0}-\beta_{k-1}\lambda_{i_0})c_{k} \\
& =((-\alpha_{k-2}+n_k\alpha_{k-1})\mu_{i_0}-(-\beta_{k-2}+m_k\beta_{k-1})\lambda_{i_0})c_{k-1}+(\alpha_{k-1}\mu_{i_0}-\beta_{k-1}\lambda_{i_0})(-c_{k-2}+n_k c_{k-1}) \\
& =(\alpha_{k-1}\mu_{i_0}-\beta_{k-1}\lambda_{i_0})c_{k}=1
\end{aligned}
\]
Where for the first equation we use that $1=(\alpha_{k-1}\mu_{i_0}-\beta_{k-1}\lambda_{i_0})c_{k-2}+(\alpha_{k-2}\mu_{i_0}-\beta_{k-2}\lambda_{i_0})c_{k-1}$. Therefore, we get, $(E)^2=-\frac{\alpha_k\mu_{i_0}-\beta_k\lambda_{i_0}}{c_k}$. And in particular, since $c_{j_0}=1$ in the last blowup we get $(\overline{E})^2=-(\alpha_{j_0}\mu_{i_0}-\beta_{j_0}\lambda_{i_0})$. 
\\
Finally, we need to show that $\alpha_{j_0}\mu_{i_0}-\beta_{j_0}\lambda_{i_0}=1$. We know by definition that $\alpha_{j_0}a+\beta_{j_0}b=b\mu_{i_0}+a\lambda_{i_0}=1$. So, $a=n(\mu_{i_0}-\beta_{j_0})$, $b=n(\alpha_{j_0}-\lambda_{i_0})$. However, by the choice of $\mu_{i_0}$, $\lambda_{i_0}$, $\beta_{j_0}$ and $\alpha_{j_0}$, we conclude $n=1$. Then, since $\lambda_{i_0}=\alpha_{j_0}-b$, $\mu_{i_0}=\beta_{j_0}+a$,
\[
(\overline{E})^2=-(\alpha_{j_0}\mu_{i_0}-\beta_{j_0}\lambda_{i_0})=-1
\]
\end{proof}

The values of $j_0$, $c_{j_0-1}$ and $n_k$ can be specified using the notation in \S \ref{not 2}. We have two possibilities: 
\begin{itemize} 
\item If $k_0=2n_0$,
\[
\begin{aligned}
& j_0= j+se_{n_0}\;\text{ and }\; \left(\overline{F}^{\left(j+se_{n}+k\right)}\right)^{2}=-n_{\left(j+se_{n}+k+1\right)}=\left\{\begin{array}{ll}
-\left(j_{2 n+1}+2\right) & \text { if } k=1, \\
-2 & \text { Otherwise. } 
\end{array}\right. \\
& (\overline{F}^{(1)})^2=-n_2=-(i+2),\;\;\;\;(F^{(j_0)})^2=-c_{j_0-1}=-2.
\end{aligned}
\]
In the following picture, we can see all the divisors involved in the resolution of the singularities $p$ and $q$ when $k_0=2n_0$. Notice that the subscripts represent the self-intersections.
\begin{figure}[H]
    \centering
    \label{fig:Res of sin 2n0}
    \includegraphics[width=1\linewidth]{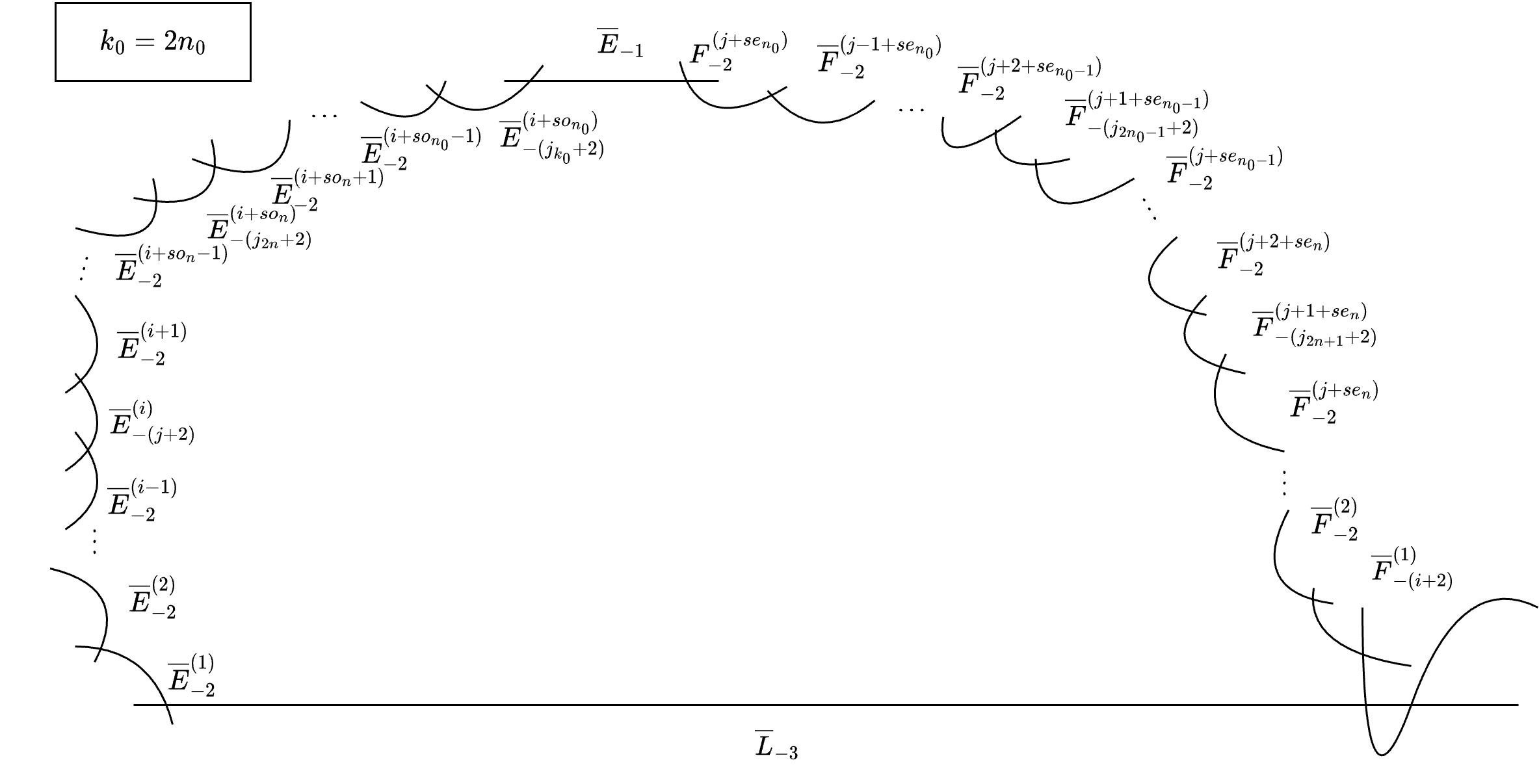}
\end{figure}
\item If $k_0=2n_0+1$,
\[
\begin{aligned}
& j_0= j+se_{n_0}+1\;\text{ and }\; \left(\overline{F}^{\left(j+se_{n}+k\right)}\right)^{2}=-n_{\left(j+se_{n}+k+1\right)}=\left\{\begin{array}{ll}
-\left(j_{2 n+1}+2\right) & \text { if } k=1, \\
-2 & \text { Otherwise. } 
\end{array}\right. \\
& (\overline{F}^{(1)})^2=-n_2=-(i+2),\;\;\;\;(F^{(j_0)})^2=-c_{j_0-1}=-(j_{2n_0+1}+1).
\end{aligned}
\]
In the following picture, we can see all the divisors involved in the resolution of the singularities $p$ and $q$ when $k_0=2n_0+1$. As before, the subscripts represent the self-intersections.
\begin{figure}[!ht]
    \centering
    \label{fig:Res of sin 2n0 1}
    \includegraphics[width=1\linewidth]{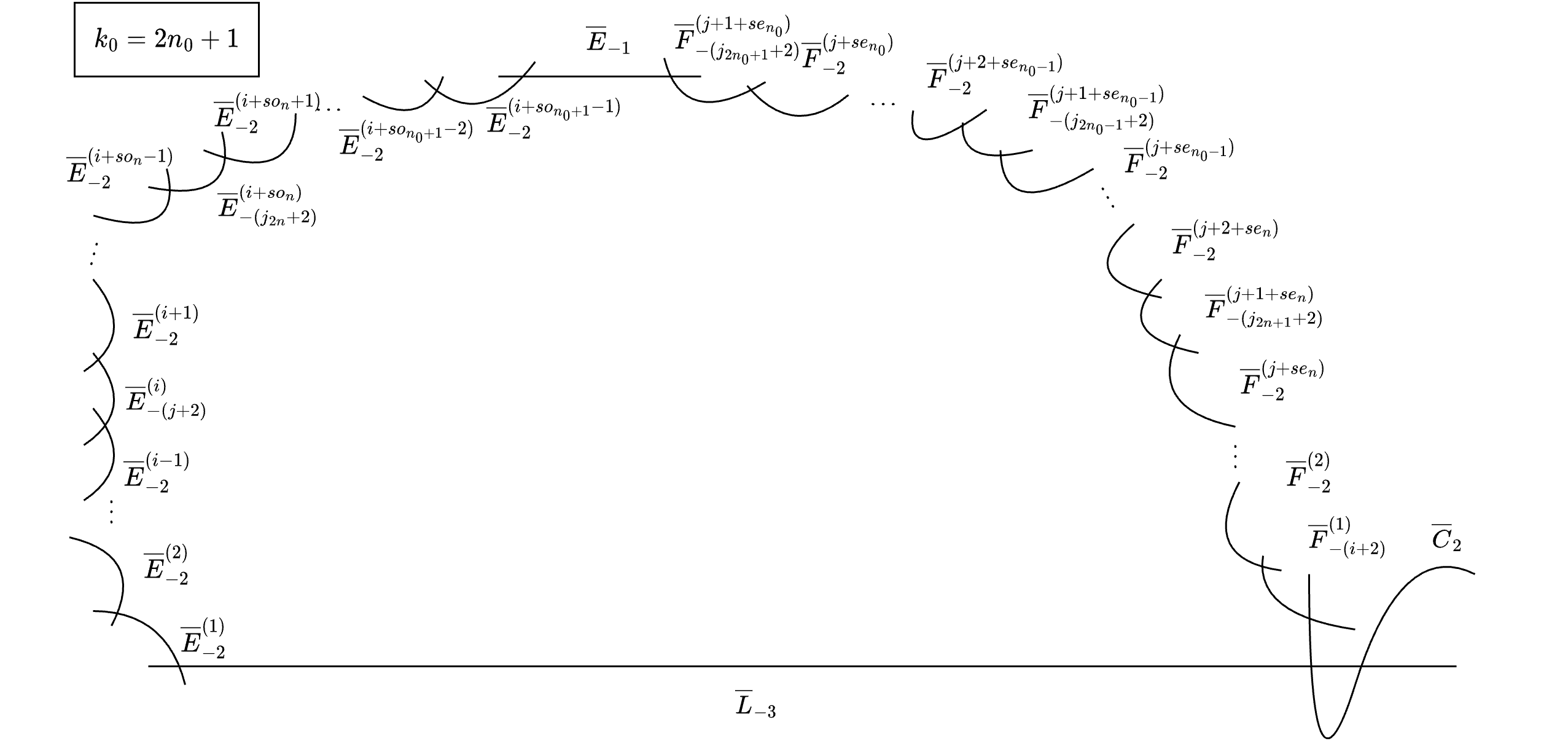}
\end{figure}
\end{itemize}
\newpage
Notice that in both cases, we can contract $(-1)$-curves until we get a weak del Pezzo degree one surface:

\begin{itemize}
    \item For $k=2n_0$:
    \begin{itemize}
        \item[(1)] Contract $\overline{E}$,
        \item[(2)] Contract $F^{(j+se_{n_0})}$,..., $\overline{F}^{(j+2+se_{n_0-1})}$ ($j_{2n_0}-1$ contractions).
        \item[(3)] Contract $\overline{E}^{(i+so_{n_0})}$,..., $\overline{E}^{(i+1+so_{n_0-1})}$ ($j_{2n_0-1}$ contractions).
        \item[(4)] Contract $\overline{F}^{(j+1+se_{n_0-1})}$,..., $\overline{F}^{(j+2+se_{n_0-1})}$ ($j_{2n_0-2}$ contractions).\\
        \vdots
        \item[($k_0+1$)] Contract $\overline{E}^{(i+j_1)}$,..., $\overline{E}^{(i+1)}$ ($j_{1}$ contractions).
        \item[($k_0+2$)] Contract $\overline{F}^{(j+1)}$,..., $\overline{F}^{(2)}$ ($j$ contractions).
        \item[($k_0+3 $)] Contract $\overline{E}^{(i)}$,..., $\overline{E}^{(1)}$ ($i$ contractions).
    \end{itemize}
    \item For $k=2n_0+1$:
    \begin{itemize}
        \item[(1)] Contract $\overline{E}$,
        \item[(2)] Contract $\overline{E}^{(i+so_{n_0+1}-1)}$,..., $\overline{E}^{(i+1+so_{n_0})}$ ($j_{2n_0+1}-1$ contractions).
        \item[(3)] Contract $F^{(j+se_{n_0}+1)}$,..., $\overline{F}^{(j+2+se_{n_0-1})}$ ($j_{2n_0}$ contractions).
        \item[(4)] Contract $\overline{E}^{(i+so_{n_0})}$,..., $\overline{E}^{(i+1+so_{n_0-1})}$ ($j_{2n_0-1}$ contractions).\\
        \vdots
        \item[($k_0+1$)] Contract $\overline{E}^{(i+j_1)}$,..., $\overline{E}^{(i+1)}$ ($j_{1}$ contractions).
        \item[($k_0+2$)] Contract $\overline{F}^{(j+1)}$,..., $\overline{F}^{(2)}$ ($j$ contractions).
        \item[($k_0+3 $)] Contract $\overline{E}^{(i)}$,..., $\overline{E}^{(1)}$ ($i$ contractions).
    \end{itemize}
\end{itemize}
In total, we contract $i_0+j_0=i+j+\sum_{k=1}^{k_0} j_k$ (-1)-curves in both cases. Let $g:\overline{X}_{a,b}\to \mathbf{X}$ the composition of all the contractions. 
The picture at the end is:

\begin{figure}[H]
    \centering
    \includegraphics[width=0.5\linewidth]{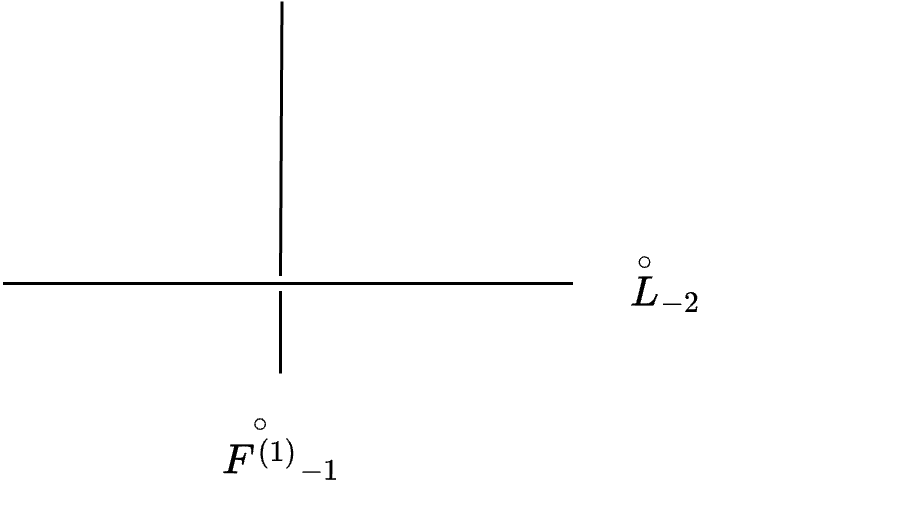}
\end{figure}

Where $\overset{\circ}{F^{(1)}}=g(\overline{F^{(1)}})$, and $\overset{\circ}{L}=g(\overline{L})$. So we have the following diagram:
{$$
    \xymatrix{
    \overline{X}_{a,b}\ar[d]_{f_q}\ar[ddr]^g& \\
    \Check{X}_{a,b}\ar[d]_{f_p}&  \\
    X_{ab}\ar[d]_{(\pi_{a,b})_{p_0}}& \mathbf{X}\ar[dl]^{(\pi_{1,1})_{p_0}}\\
    X&}
$$}

{Let us define $\pi_{1,1}= \pi_{a,b}\circ f_p\circ f_q\circ g^{-1}$}. We see now that this $\mathbf{X}$ is a weak del Pezzo surface of degree 1{, and $\pi_{1,1}$ is the ordinary blowup at the point $p
_0\in X$, where $\overset{\circ}{F^{(1)}}$ is the $\pi_{1,1}$-exceptional divisor}. 
\begin{proposition}
The surface $\mathbf{X}$ is a weak del Pezzo surface of degree 1.
\end{proposition}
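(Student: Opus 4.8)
The plan is to verify the two conditions defining a weak del Pezzo surface of degree~$1$: that $\mathbf{X}$ is a smooth projective surface with $-K_{\mathbf X}$ nef and big, and that $(-K_{\mathbf X})^2 = 1$. Since $g:\overline{X}_{a,b}\to \mathbf X$ is a composition of $i_0+j_0$ contractions of $(-1)$-curves starting from the smooth surface $\overline{X}_{a,b}$ (the full resolution of $X_{a,b}$), smoothness of $\mathbf X$ is automatic, and each contraction raises the self-intersection of the canonical class by~$1$. I would first record that $X_{a,b}\to X$ is a weighted blowup of a degree~$2$ del Pezzo at a smooth point, so $K_{X_{a,b}}^2 = (-K_X)^2 - 1 = 1$ once we pass to the index-one situation; more carefully, I track $K^2$ through the resolution $f_p$ and $f_q$ (each $\sigma_k$, $\tau_k$ is a weighted blowup at a cyclic quotient singularity, and $K^2$ is preserved under crepant-type bookkeeping only up to the discrepancy corrections — so the cleanest route is to instead use that $g$ maps $\overline{X}_{a,b}$ to $\mathbf X$ by exactly $i_0+j_0$ blowdowns, hence $K_{\mathbf X}^2 = K_{\overline{X}_{a,b}}^2 + (i_0+j_0)$, and separately compute $K_{\overline{X}_{a,b}}^2$). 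The key numerical input is the count established just above: $i_0 + j_0 = i + j + \sum_{k=1}^{k_0} j_k$, and $\overline{X}_{a,b}$ is obtained from $X$ by the weighted blowup $\pi_{a,b}$ followed by $i_0+j_0$ further blowups, so its Picard rank and $K^2$ are pinned down; a direct check gives $K_{\overline{X}_{a,b}}^2 = 1 - (i_0+j_0)$, whence $K_{\mathbf X}^2 = 1$.

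An alternative and arguably cleaner argument — the one I would actually write — uses the diagram at the end of the section: $\pi_{1,1} = \pi_{a,b}\circ f_p \circ f_q \circ g^{-1} : \mathbf X \to X$ is a birational morphism to the del Pezzo surface $X$ of degree~$2$, and one shows directly that it is the \emph{ordinary} blowup of $X$ at the single point $p_0$, with $\overset{\circ}{F^{(1)}}$ the exceptional $(-1)$-curve. Granting this, $\mathbf X = \mathrm{Bl}_{p_0} X$ is smooth with $(-K_{\mathbf X})^2 = (-K_X)^2 - 1 = 2 - 1 = 1$, so it only remains to show $-K_{\mathbf X}$ is nef. For this I would invoke the standard criterion: a smooth surface with $K^2 = 1$ obtained by blowing up a del Pezzo is weak del Pezzo iff $-K$ has no curve of negative intersection, equivalently (in degree~$1$) iff there is no $(-2)$-curve or worse; and since $X$ is an \emph{honest} del Pezzo (so $-K_X$ ample) and we blow up a single reduced point, $-K_{\mathbf X} = \pi_{1,1}^*(-K_X) - \overset{\circ}{F^{(1)}}$, and one checks $(-K_{\mathbf X})\cdot \Gamma \geq 0$ for every irreducible curve $\Gamma$: if $\Gamma = \overset{\circ}{F^{(1)}}$ then $(-K_{\mathbf X})\cdot \overset{\circ}{F^{(1)}} = 1 > 0$, and otherwise $\Gamma$ is the strict transform of a curve on $X$ of multiplicity $m\geq 0$ at $p_0$ with $(-K_{\mathbf X})\cdot\Gamma = (-K_X)\cdot\pi_{1,1*}\Gamma - m$, which is $\geq 0$ precisely because $p_0$ lies on a \emph{unique} $(-1)$-curve (the hypothesis of Theorem~\ref{maintheo}) — any curve through $p_0$ with $(-K_X)\cdot C < \mathrm{mult}_{p_0} C$ would have to be, or contain, such a configuration, and the classification in \S\ref{dP sect} rules this out in case~(6).

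The main obstacle I anticipate is precisely the last point: proving nefness of $-K_{\mathbf X}$ rather than just computing $K_{\mathbf X}^2$. One must rule out the existence of an irreducible curve $C\subset X$ through $p_0$ with $\mathrm{mult}_{p_0}C > (-K_X)\cdot C$; equivalently, that $\mathbf X$ carries no irreducible curve of self-intersection $\leq -2$ meeting $K_{\mathbf X}$ positively, i.e. no $(-K_{\mathbf X})$-negative curve. The honest-del-Pezzo case would be where $p_0$ is a general point (blowup is again a genuine del Pezzo), but here $p_0$ lies on the $(-1)$-curve $L$, and its strict transform $\overset{\circ}{L} = g_*\overline{L}$ has $\overset{\circ}{L}{}^2 = L^2 - 1 = -2$ — this is the $(-2)$-curve visible in the final picture, and it is exactly why $\mathbf X$ is only a \emph{weak} del Pezzo and not a del Pezzo. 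So the content is: (i) $\overset{\circ}{L}$ is a $(-2)$-curve, hence $-K_{\mathbf X}$ is not ample; (ii) no curve of self-intersection $< -2$ appears, which follows from $X$ being smooth and $p_0$ lying on just one $(-1)$-curve, so the only multiplicity-$(-K_X)\cdot C$ curve through $p_0$ is $L$ itself. I would organize the write-up as: first identify $\pi_{1,1}$ as the ordinary blowup of $p_0$ (tracking the exceptional locus $\overset{\circ}{F^{(1)}}$ through $g$), then compute $(-K_{\mathbf X})^2 = 1$, then run the intersection-number check above using the case-(6) geometry to conclude nef, and finally note $\overset{\circ}{L}$ is the unique $(-2)$-curve to confirm "weak."
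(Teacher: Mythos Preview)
Your preferred (second) approach coincides with the paper's: identify $\pi_{1,1}:\mathbf X\to X$ as the ordinary blowup of $p_0$ with exceptional curve $\overset{\circ}{F^{(1)}}$, and read off $(-K_{\mathbf X})^2=2-1=1$. The paper makes this identification concrete by tracking the coefficient of $F^{(1)}$ in the anticanonical class through the construction: using the discrepancy of the weighted blowup at the quotient point $q\sim\tfrac{1}{a}(1,c_1)$ it finds $-K_{\mathbf X}\sim\pi_{1,1}^*(-K_X)-\overset{\circ}{F^{(1)}}$, which is exactly the ramification formula for an ordinary blowup of a smooth point. Your phrase ``one shows directly that it is the ordinary blowup'' would need either this discrepancy computation or, equivalently, the Picard-rank bookkeeping from your first approach ($\rho(\overline{X}_{a,b})=\rho(X)+1+i_0+j_0$ and $g$ contracts $i_0+j_0$ curves, so $\rho(\mathbf X)=\rho(X)+1$, forcing $\pi_{1,1}$ to be a single point-blowup).

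Where you go beyond the paper is the nefness verification. The paper's proof stops after computing $(-K_{\mathbf X})^2=1$ and noting that contracting $\overset{\circ}{F^{(1)}}$ returns $X$; it does not check that $-K_{\mathbf X}$ is nef, taking the ``weak del Pezzo'' conclusion for granted. Your curve-by-curve check of $-K_{\mathbf X}\cdot\Gamma\ge 0$, using the case-(6) hypothesis that $p_0$ lies on a unique $(-1)$-curve so that only $\overset{\circ}{L}$ becomes a $(-2)$-curve and nothing of self-intersection $\le -3$ can appear, is correct and fills what is, strictly speaking, a gap in the paper's write-up. That said, the underlying fact (the ordinary blowup of a smooth del Pezzo of degree $\ge 2$ at any point is at worst weak del Pezzo) is standard, which is presumably why the paper does not spell it out.
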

\begin{proof}
 With each blowup we do for the resolution, we get an exceptional divisor, as we saw above, and these divisors are added with a nonpositive coefficient to the equivalence class of the anticanonical. For instance, with the weighted blowup we get  
\[
-K_{X_{a,b}}\sim \pi^*(-K_X)-(a+b-1)E
\]
Notice that the coefficients of the strict transforms of the existing divisors do not change and do not change in the successive ordinary blowups either. The same thing happens when we contract exceptional curves, the coefficients of the remaining exceptional curves do not change in the process.
Therefore, to prove that $(-K_\mathbf{X})^2=1$. We need to find the coefficient $e$ of $\overset{\circ}{F^{(1)}}$ in $-K_\mathbf{X}\sim {\pi_{1,1}^*(-K_X)}+eF^{(1)}$. \\
We got $F^{(1)}$, with the first blowup of the singular point $q\sim \frac{1}{a}(1,c_1)$, where $c_1=-b+a$. \\
Therefore, after the ordinary blowup at $q$, $\sigma: Y\to X_{a,b}$, (we are abusing notation here since this is not the same order of resolution we used above, but the coefficient of $F^{(1)}$ still is the same since it is independent of the order of the blowup), we get the following where $\overline{E}$ is the strict transform of $E$:
\[
\begin{array}{rl}
    -K_{Y}\sim & {\sigma^*\left(-K_{X_{a,b}}\right)+\frac{a-(c_1+1)}{a}F^{(1)}} \\
     \sim & {\sigma^*\left(\pi_{a,b}^*(-K_{X})-(a+b-1)E\right)+\frac{a-(c_1+1)}{a}F^{(1)}}\\
     \sim & {\sigma^*\left(\pi_{a,b}^*(-K_{X})\right)-(a+b-1)\bar{E}-\left(-\frac{a+b-1}{a}+\frac{a-(c_1+1)}{a}\right)F^{(1)}.}
\end{array}
\]
Moreover, notice that {$e=-1$}. 

So we have that, $(-K_\mathbf{X})^2={\left(\pi_{1,1}^*(-K_X)\right)^2+\left(\overset{\circ}{F^{(1)}}\right)^2=2-1=1}$. {It is clear that after contracting $\overset{\circ}{F^{(1)}}$ we will be back in the original degree 2 del Pezzo surface we started with. Therefore, we can say that $\pi_{1,1}: \mathbf{X}\to X$ is the ordinary blowup of $p_0\in X$.}
\end{proof}

\begin{remark}
In a weak del Pezzo surface of degree 1, we have a birational involution called the Bertini involution, $\iota$.
As in the proof of Lemma 2.15 in \cite{acc2021}, the linear system $\vert -2K_{\mathbf{X}}\vert$ gives a morphism $\mathbf{X}\to\PP(1,1,2)$ with the following Stein factorization:

$$\xymatrix{
\mathbf{X}\ar[r]^{\upsilon}& \widetilde{\mathbf{X}} \ar[d]^{\omega}\\
&\PP(1,1,2)
}$$
where $\upsilon$ is a contraction of all $(-2)$-curves in the surface $\mathbf{X}$ (if any), and $\omega$ is a double cover branched over the union of a sextic curve in $\PP(1,1,2)$ and the singular points of $\PP(1,1,2)$. Notice that $\widetilde{\mathbf{X}}$ is smooth if and only if $-K_{\mathbf{X}}$ is ample, and in that case, $\upsilon$ is an isomorphism. Furthermore, if the divisor $-K_{\mathbf{X}}$ is not ample, then the surface $\mathbf{X}$ contains at most four (-2)-curves. The double cover $\widetilde{\mathbf{X}}\to \PP(1,1,2)$ induces an involution $\iota\in\Aut(\mathbf{X})$ known as the Bertini involution.\\
For detailed equations of the Bertini involution, check \cite{Moo43}.

\end{remark}

Let $\overset{\circ}{D}=\iota(\overset{\circ}{F^{(1)}})$, and with this involution it is straightforward to check that
\[
\overset{\circ}{F^{(1)}}+\overset{\circ}{D}+\overset{\circ}{L}\sim-2K_{\mathbf{X}}.
\]
{Since $\pi_{1,1}$ contracts $\overset{\circ}{F^{(1)}}$ to $p_0$, we get that $-2K_X\sim L+D$, where $D=\pi_{1,1}(\overset{\circ}{D})$. }

\begin{lemma} Let $\overset{\circ}{D}$, $\overset{\circ}{F^{(1)}}$ and $\overset{\circ}{L}$ described as above. Then they do not intersect at the same point. 
\end{lemma}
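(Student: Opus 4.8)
The plan is to work on the weak del Pezzo surface $\mathbf{X}$ of degree $1$, where the three curves $\overset{\circ}{F^{(1)}}$, $\overset{\circ}{D}$ and $\overset{\circ}{L}$ live, and to argue by contradiction: suppose there is a point $x\in\mathbf{X}$ lying on all three curves. First I would record the numerical data we already have. Since $\pi_{1,1}:\mathbf{X}\to X$ is the ordinary blowup of $p_0$ with exceptional divisor $\overset{\circ}{F^{(1)}}$, and $L$ is a $(-1)$-curve on $X$ through $p_0$, its strict transform $\overset{\circ}{L}$ satisfies $(\overset{\circ}{L})^2=-2$ and $\overset{\circ}{L}\cdot\overset{\circ}{F^{(1)}}=1$ (the $(-1)$-curve passes through $p_0$ with multiplicity one, as it is smooth there). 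By definition $\overset{\circ}{D}=\iota(\overset{\circ}{F^{(1)}})$, so $(\overset{\circ}{D})^2=(\overset{\circ}{F^{(1)}})^2=-1$ and $-K_{\mathbf{X}}\cdot\overset{\circ}{D}=-K_{\mathbf{X}}\cdot\overset{\circ}{F^{(1)}}=1$; moreover from $\overset{\circ}{F^{(1)}}+\overset{\circ}{D}+\overset{\circ}{L}\sim-2K_{\mathbf{X}}$ one computes $\overset{\circ}{F^{(1)}}\cdot\overset{\circ}{D}$, $\overset{\circ}{F^{(1)}}\cdot\overset{\circ}{L}$ and $\overset{\circ}{D}\cdot\overset{\circ}{L}$ by intersecting the relation with each curve: e.g. intersecting with $\overset{\circ}{F^{(1)}}$ gives $-1+\overset{\circ}{F^{(1)}}\cdot\overset{\circ}{D}+1=2$, so $\overset{\circ}{F^{(1)}}\cdot\overset{\circ}{D}=2$, and similarly one finds the remaining numbers. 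These pairwise intersection numbers are small, so a common point forces strong local constraints.

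Next I would translate "all three curves pass through $x$" into the anticanonical curve $C_p\in|-K_X|$. Recall $-2K_X\sim L+D$ where $D=\pi_{1,1}(\overset{\circ}{D})$, and that pulling back by $\pi_{1,1}$ one has $\pi_{1,1}^*(-2K_X)\sim\overset{\circ}{L}+\overset{\circ}{D}+2\overset{\circ}{F^{(1)}}$; comparing with $\overset{\circ}{F^{(1)}}+\overset{\circ}{D}+\overset{\circ}{L}\sim -2K_{\mathbf{X}}=\pi_{1,1}^*(-2K_X)-2\overset{\circ}{F^{(1)}}$ is consistent and tells us that $\pi_{1,1}$ maps the configuration $\{\overset{\circ}{L},\overset{\circ}{D}\}$ to $\{L,D\}$ while contracting $\overset{\circ}{F^{(1)}}$ to $p_0$. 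If $x$ were on all three of $\overset{\circ}{L}$, $\overset{\circ}{D}$, $\overset{\circ}{F^{(1)}}$, then $x$ would lie on $\overset{\circ}{F^{(1)}}$, hence $\pi_{1,1}(x)=p_0$, and so both $L=\pi_{1,1}(\overset{\circ}{L})$ and $D=\pi_{1,1}(\overset{\circ}{D})$ would pass through $p_0$. In particular $L+D$, a member of $|-2K_X|$, would be singular at $p_0$, forcing the curve $C_{p_0}\in|-K_X|$ singular at $p_0$ to be... actually, the cleaner route: $p_0$ lies on case (6) of the classification in $\S\ref{dP sect}$, i.e. $\rho(p_0)\notin R$ and $p_0$ is on a \emph{unique} $(-1)$-curve $L$. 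I would use this to show $D$ cannot pass through $p_0$: either $D$ contains a $(-1)$-curve through $p_0$ distinct from $L$ (contradiction with uniqueness), or the local structure of $D$ at $p_0$ combined with $-2K_X\sim L+D$ contradicts $\rho(p_0)\notin R$ (the anticanonical pencil and the ramification curve $R$ are what detect singular anticanonical members). The precise argument requires identifying the irreducible components of $\overset{\circ}{D}$ and their images, using that $\overset{\circ}{D}=\iota(\overset{\circ}{F^{(1)}})$ and the description of $\iota$ via the double cover $\mathbf{X}\to\PP(1,1,2)$.

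An alternative, more self-contained line I would pursue in parallel is purely on $\mathbf{X}$: the involution $\iota$ acts as $-1$ on $\operatorname{Pic}(\mathbf{X})$ modulo $K_{\mathbf{X}}$ (it is the Bertini involution of the degree-$1$ surface), and the morphism $\upsilon\circ(\text{anticanonical}):\mathbf{X}\to\PP(1,1,2)$ realizes $\iota$ as the deck transformation. A point $x$ fixed-position-wise by the incidence $\{\overset{\circ}{F^{(1)}},\overset{\circ}{D}=\iota(\overset{\circ}{F^{(1)}})\}$ would have to be a fixed point of $\iota$ (since $\iota$ swaps the two curves through $x$, it must fix $x$ or swap $x$ with $\iota(x)$ on $\overset{\circ}{D}$; if $x\in\overset{\circ}{F^{(1)}}\cap\overset{\circ}{D}$ then $\iota(x)\in\iota(\overset{\circ}{F^{(1)}})\cap\iota(\overset{\circ}{D})=\overset{\circ}{D}\cap\overset{\circ}{F^{(1)}}$, and then one argues $x=\iota(x)$ using that $\overset{\circ}{F^{(1)}}$ is $\iota$-antiinvariant, so $x$ is one of the finitely many ramification points). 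Then $x\in\overset{\circ}{L}$ too would mean $\overset{\circ}{L}$ passes through a fixed point of $\iota$; combined with $\iota(\overset{\circ}{L})=\overset{\circ}{L}$ (which follows from $\overset{\circ}{F^{(1)}}+\overset{\circ}{D}+\overset{\circ}{L}\sim-2K_{\mathbf{X}}$ and $\iota(\overset{\circ}{F^{(1)}})=\overset{\circ}{D}$: applying $\iota$ gives $\overset{\circ}{D}+\overset{\circ}{F^{(1)}}+\iota(\overset{\circ}{L})\sim-2K_{\mathbf{X}}$, whence $\iota(\overset{\circ}{L})\sim\overset{\circ}{L}$ and by rigidity $\iota(\overset{\circ}{L})=\overset{\circ}{L}$), this would force $\overset{\circ}{L}$, a $(-2)$-curve, to be contracted by $\upsilon$ or to behave badly under $\omega$, yielding a contradiction with the smoothness/genericity assumptions coming from case (6).

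The main obstacle I anticipate is the careful bookkeeping of \emph{where} the three curves meet pairwise — the relation $-2K_{\mathbf{X}}\sim\overset{\circ}{F^{(1)}}+\overset{\circ}{D}+\overset{\circ}{L}$ only gives total intersection numbers, and I need to rule out the possibility that these intersections are concentrated at a single common point rather than spread out (or occurring with tangency). Pinning this down will require using the explicit local coordinates $u,v$ at $p_0$ with $L=\{u=0\}$ and tracking them through the weighted blowup $\pi_{a,b}$, the resolutions $f_p$, $f_q$, and the contractions $g$ — essentially re-examining the chains of exceptional curves drawn in the figures of $\S\ref{Finding D}$ to see that $\overset{\circ}{F^{(1)}}$, $\overset{\circ}{D}$, $\overset{\circ}{L}$ are attached to different spots of that chain. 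The cleanest finish is likely to combine the $\iota$-symmetry argument (which is coordinate-free) with the one explicit fact that $\overset{\circ}{L}\cdot\overset{\circ}{F^{(1)}}=1$ transversally, i.e. $L$ meets the exceptional divisor of the ordinary blowup at a single reduced point, so that the three curves through a hypothetical common $x$ would be forced into a configuration incompatible with $(\overset{\circ}{L})^2=-2$ and the structure of the anticanonical map on $\mathbf{X}$.
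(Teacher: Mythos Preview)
Your first route has a concrete error: you hope to contradict case~(6) by showing that $D$ passes through $p_0$, but $D$ \emph{always} passes through $p_0$ --- in fact with multiplicity~$2$, as Proposition~\ref{D} records (for the ordinary blowup this reads $\pi_{1,1}^*(D)=\overset{\circ}{D}+2\overset{\circ}{F^{(1)}}$). So the hypothetical triple point on $\mathbf{X}$ yields nothing new down on $X$, and neither ``$D$ contains a second $(-1)$-curve through $p_0$'' nor ``$\rho(p_0)\in R$'' follows. Your Bertini-involution route is closer in spirit but also stalls: you correctly deduce $\iota(\overset{\circ}{L})=\overset{\circ}{L}$ and that $\iota$ swaps the single points $\overset{\circ}{F^{(1)}}\cap\overset{\circ}{L}$ and $\overset{\circ}{D}\cap\overset{\circ}{L}$, but the contradiction you then propose (``$\overset{\circ}{L}$ would be contracted by $\upsilon$'') is not one --- $\overset{\circ}{L}$ \emph{is} the $(-2)$-curve that $\upsilon$ contracts. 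What you are still missing is a reason why that single point cannot be $\iota$-fixed.

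The paper's argument goes the other way: instead of contracting $\overset{\circ}{F^{(1)}}$ down to $X$, it contracts $\overset{\circ}{L}$ via $\upsilon:\mathbf{X}\to\widetilde{\mathbf{X}}$ and works at the resulting $A_1$ point, through which both $\upsilon(\overset{\circ}{F^{(1)}})$ and $\upsilon(\overset{\circ}{D})$ pass. Comparing their intersection number on $\widetilde{\mathbf{X}}$ with the numbers on $\mathbf{X}$ (namely $\overset{\circ}{F^{(1)}}\cdot\overset{\circ}{D}=2$ and $\overset{\circ}{F^{(1)}}\cdot\overset{\circ}{L}=\overset{\circ}{D}\cdot\overset{\circ}{L}=1$, all read off from $-2K_{\mathbf{X}}\sim\overset{\circ}{F^{(1)}}+\overset{\circ}{D}+\overset{\circ}{L}$) is what forces $\overset{\circ}{F^{(1)}}$ and $\overset{\circ}{D}$ to meet $\overset{\circ}{L}$ at distinct points. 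The idea absent from your sketch is precisely this: you never look at the contraction of $\overset{\circ}{L}$ or the local picture at the $A_1$ point, which is where the separation of the three curves becomes visible.
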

\begin{proof}
In this proof, we have to take into account several things. 
    Notice that in $\widetilde{\mathbf{X}}$, $\upsilon(\overset{\circ}{F^{(1)}})\cdot\upsilon(\overset{\circ}{D})=3$ and both of them contain the singular point coming from the contraction of $\overset{\circ}{L}$. 
    Then, after the blowup of the singular point at $\widetilde{\mathbf{X}}$, we recover $\overset{\circ}{L}$ again, and since $\overset{\circ}{F^{(1)}}+\overset{\circ}{D}+\overset{\circ}{L}\sim-2K_{\mathbf{X}}$  the intersections change as follows:
    \[
    \overset{\circ}{F^{(1)}}\cdot \overset{\circ}{D}=2,\;\;\; \overset{\circ}{F^{(1)}}\cdot \overset{\circ}{L}=\overset{\circ}{L}\cdot \overset{\circ}{D}=1.
    \]
    This implies that $\overset{\circ}{F^{(1)}}\cap \overset{\circ}{D}\cap\overset{\circ}{L}=\emptyset$.
\end{proof}
{This is the image representing the curves involved after applying $\pi_{1,1}$:}
\begin{figure}[H]
    \centering
    \label{fig:(1,1) blowup}
    \caption{Ordinary blowup.}
    \includegraphics[width=1.1\linewidth]{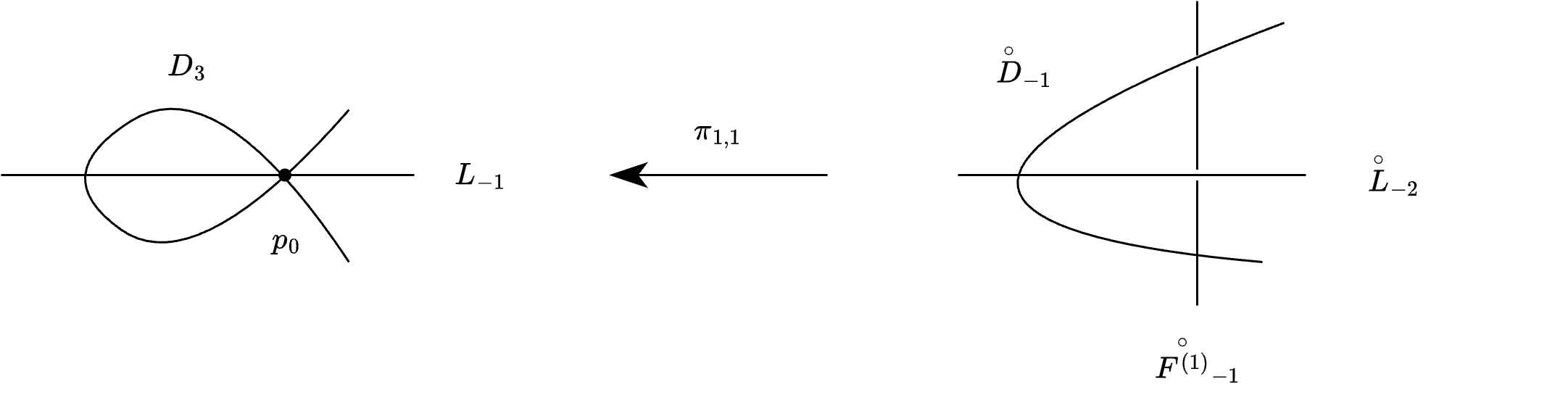}
\end{figure}
{In this figure, we can also see how the intersections change.}

As a consequence, we have the following proposition:

\begin{proposition}\label{D} {Assume that $3a<4b$ and set $\widetilde D=(f_p\circ f_q)(g^*(\overset{\circ}{D}))$} {on} $X_{a,b}$. {Then, $\pi_{a,b}^*(D)=\widetilde D+2bE$} and $(\widetilde D)^2=3-\frac{4b}{a}<0$.
 \end{proposition}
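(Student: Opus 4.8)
The plan is to compute the pullback $\pi_{a,b}^*(D)$ in two steps, tracking multiplicities along the resolution and the contractions, and then use the adjunction/intersection data recorded above. First I would recall that $-2K_X \sim L + D$, and pull this relation back under $\pi_{a,b}$. On $X_{a,b}$ we have $\pi_{a,b}^*(-K_X) \sim -K_{X_{a,b}} + (a+b-1)E$, hence $\pi_{a,b}^*(-2K_X) = 2\pi_{a,b}^*(-K_X)$, and on the other hand $\pi_{a,b}^*(L) = \widetilde L + a E$ (the multiplicity $a$ is forced because $L = \{u=0\}$ with $\wt(u)=a$). Therefore, writing $\pi_{a,b}^*(D) = \widetilde D + e_D E$ for the as-yet-unknown coefficient $e_D$, taking the pullback of $-2K_X \sim L + D$ gives a linear equivalence on $X_{a,b}$ that determines $e_D$ once I know the coefficient of $E$ in $\pi_{a,b}^*(-2K_X)$. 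The cleanest route, though, is to pass all the way up to $\mathbf X$: on $\mathbf X$ we have $\overset{\circ}{F^{(1)}} + \overset{\circ}{D} + \overset{\circ}{L} \sim -2K_{\mathbf X}$, and $\pi_{1,1}^*(D) = \overset{\circ}{D} + (\text{mult}_{\overset{\circ}{F^{(1)}}})\,\overset{\circ}{F^{(1)}}$; since $D$ passes through $p_0$ once transversally along the branch separate from $L$, this multiplicity is $1$. I would then transport $\overset{\circ}{D}$ down through $g$, $f_q$, $f_p$ to $X_{a,b}$, using that $\widetilde D := (f_p\circ f_q)(g^*(\overset{\circ}{D}))$ and that $g^*$ introduces the exceptional curves $\overline E$, $\overline E^{(k)}$, $\overline F^{(k)}$ with coefficients read off from the contraction sequence; pushing forward by $f_p\circ f_q$ then collapses all of these except to produce the $E$-coefficient. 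Matching the total with $\pi_{a,b}^*(D)$ yields the coefficient $2b$.

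The coefficient $2b$ should come out as follows. Apply $(\pi_{a,b})_{p_0} = \pi_{1,1}\circ g\circ (f_p\circ f_q)^{-1}$-compatibility: since $\pi_{1,1} = \pi_{a,b}\circ f_p\circ f_q\circ g^{-1}$, we have $(\pi_{a,b}\circ f_p\circ f_q)^*(D) = g^*\pi_{1,1}^*(D) = g^*(\overset{\circ}{D}) + g^*(\overset{\circ}{F^{(1)}})$. Now $g^*(\overset{\circ}{F^{(1)}}) = \overline F^{(1)} + \sum(\cdots)$ is supported on the contracted $(-1)$-curves, so its image under $(f_p\circ f_q)_*$ contributes to $E$ with some multiplicity $\mu$, and pushing forward $g^*(\overset{\circ}{D})$ gives $\widetilde D + \nu E$. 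The sum $\mu + \nu$ is exactly the coefficient of $E$ in $\pi_{a,b}^*(D)$. On the other hand, $\pi_{a,b}^*(D) = \pi_{a,b}^*(-2K_X) - \pi_{a,b}^*(L) = 2\big(-K_{X_{a,b}} + (a+b-1)E\big) - (\widetilde L + aE)$, and using $-2K_{X_{a,b}} \sim \widetilde L + \widetilde D + cE$ for the appropriate $c$ determined by $-2K_{X_{a,b}} = 2\pi_{a,b}^*(-K_X) - 2(a+b-1)E$ together with $-2K_X \sim L + D$ pulled back — this is a short bookkeeping computation giving coefficient $2b$. I would present whichever of these two derivations is shorter; both reduce to the identity $2(a+b-1) - a - (2(a-1) - 2b) = 2b$ type cancellation once one fixes that $\text{mult}_{\overset{\circ}{F^{(1)}}}\pi_{1,1}^*D = 1$ and $\text{mult}_E\pi_{a,b}^*L = a$.

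For the self-intersection, once $\pi_{a,b}^*(D) = \widetilde D + 2bE$ is established I would square both sides: $(\pi_{a,b}^*D)^2 = D^2$ by the projection formula, and $D^2$ is computed on $X$ from $-2K_X \sim L + D$ via $D = -2K_X - L$, so $D^2 = 4K_X^2 + 4(K_X\cdot L) + L^2 = 4\cdot 2 + 4\cdot(-1) + (-1) = 3$ (using $K_X\cdot L = -1$ and $L^2 = -1$ for the $(-1)$-curve $L$, and the adjunction $2p_a(L) - 2 = L^2 + K_X\cdot L$). On the other side, $(\widetilde D + 2bE)^2 = \widetilde D^2 + 4b(\widetilde D\cdot E) + 4b^2 E^2$. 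Here $E^2 = -\frac{1}{ab}$ for the $(a,b)$-weighted blowup, and $\widetilde D\cdot E$ is computed from $\pi_{a,b}^*(D)\cdot E = 0$ (projection formula), giving $\widetilde D\cdot E = -2bE^2 = \frac{2}{a}$. Substituting, $3 = \widetilde D^2 + 4b\cdot\frac{2}{a} + 4b^2\cdot(-\frac{1}{ab}) = \widetilde D^2 + \frac{8b}{a} - \frac{4b}{a} = \widetilde D^2 + \frac{4b}{a}$, hence $\widetilde D^2 = 3 - \frac{4b}{a}$. The hypothesis $3a < 4b$ (equivalently $\frac{4b}{a} > 3$) makes this negative, so $\widetilde D$ is a negative curve, which is the point of the statement — it supplies the extra negative curve needed for the Zariski decomposition in the next section.

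The main obstacle is the first step: justifying that the coefficient of $\overset{\circ}{F^{(1)}}$ in $\pi_{1,1}^*(D)$ is exactly $1$ (equivalently, that $D$ is smooth at $p_0$ and meets $L$ with the expected transversality, rather than being tangent or singular there). This requires knowing the local behaviour of $D = \pi_{1,1}(\overset{\circ}{D})$ near $p_0$, which in turn rests on the geometry of the Bertini involution and the intersection numbers $\overset{\circ}{F^{(1)}}\cdot\overset{\circ}{D} = 2$, $\overset{\circ}{F^{(1)}}\cdot\overset{\circ}{L} = \overset{\circ}{L}\cdot\overset{\circ}{D} = 1$ recorded in the lemma above, together with the fact that the three curves do not all pass through one point. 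Once that multiplicity is pinned down, the rest is the routine projection-formula bookkeeping sketched above.
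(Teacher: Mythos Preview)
Your self-intersection computation (third paragraph) is correct and is essentially what the paper does at the end. The trouble is earlier.

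The coefficient of $\overset{\circ}{F^{(1)}}$ in $\pi_{1,1}^*(D)$ is $2$, not $1$. That coefficient is $\mult_{p_0}(D)$, and on $\mathbf{X}$ one has
\[
\overset{\circ}{D}\cdot\overset{\circ}{F^{(1)}}=\bigl(\pi_{1,1}^*D-\mult_{p_0}(D)\,\overset{\circ}{F^{(1)}}\bigr)\cdot\overset{\circ}{F^{(1)}}=0-\mult_{p_0}(D)\cdot(-1)=\mult_{p_0}(D),
\]
so the intersection number $\overset{\circ}{D}\cdot\overset{\circ}{F^{(1)}}=2$ that you yourself quote in the last paragraph forces $\mult_{p_0}(D)=2$. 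The curve $D$ has a double point at $p_0$, not a smooth one; the paper accordingly starts from $\pi_{1,1}^*(D)\sim\overset{\circ}{D}+2\overset{\circ}{F^{(1)}}$.

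Even with the right multiplicity, neither of your routes to $e_D=2b$ is complete. The ``transport down through $g$, $f_q$, $f_p$'' is precisely what the paper carries out, but it is not a one-line bookkeeping step: the paper tracks the coefficient of every exceptional curve $\overline{E}^{(k)}$, $\overline{F}^{(k)}$ through the full tower, sets up recursions $e_{i+so_n+k}=e_{i+so_n}+k\cdot f_{j+se_n+1}$ and $f_{j+se_n+k+1}=k\cdot e_{i+so_{n+1}}+f_{j+se_n+1}$, and then rewrites $2b$ via the continued-fraction data $i,j,j_k$ of $b/\delta$ to identify the final $E$-coefficient with $2b$, treating the parities $k_0=2n_0$ and $k_0=2n_0+1$ separately. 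Your alternative via $-2K_X\sim L+D$ is circular as written: pulling back gives $\widetilde L+\widetilde D+(a+e_D)E\sim -2K_{X_{a,b}}+2(a+b-1)E$, a single linear-equivalence relation that does not determine $e_D$ without independent knowledge of how $-2K_{X_{a,b}}$ sits relative to $\widetilde L,\widetilde D,E$; the ``identity'' you display does not come from any actual computation.

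There is, however, a genuine shortcut you are close to. The $E$-coefficient in $\pi_{a,b}^*(D)$ is the quasi-monomial valuation $\nu_{a,b}$ of a local equation of $D$ at $p_0$. Since $\mult_{p_0}(D)=2$, that equation has the form $\alpha u^2+\beta uv+\gamma v^2+(\text{higher order})$. The lemma immediately preceding the proposition says $\overset{\circ}{D}$ avoids the point $\overset{\circ}{L}\cap\overset{\circ}{F^{(1)}}$; equivalently, the tangent cone of $D$ at $p_0$ does not contain the line $\{u=0\}$, i.e.\ $\gamma\neq 0$. Because $b<a$, the monomial $v^2$ has strictly minimal $(a,b)$-weight among all monomials of degree $\geq 2$, so $\nu_{a,b}(D)=2b$ directly. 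This replaces the paper's combinatorial tracking by a two-line local argument, and your projection-formula computation of $(\widetilde D)^2$ then finishes the proof.
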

\begin{proof}
The idea of this proof is to take the equivalence {$\pi_{1,1}^*(D)\sim \overset{\circ}{D}+2\overset{\circ}{F^{(1)}}$ in the weak degree 1 del Pezzo surface} $\mathbf{X}$, and follow the inverse process {of blow ups and contractions} to get an equivalence for {$\pi_{a,b}^*(D)$} in $X_{a,b}$ in terms of $\widetilde D$ and $E$. 
Let $e_k$ and $f_k$ be the coefficient of $E^{(k)}$ and $F^{(k)}$, respectively, in the equivalence class of {$\pi_{a,b}^*(D)$}. Define $e$ to be the final coefficient of $E$. Notice that if $k=2n_0$, $e=e_{i+so_{n_0}}+f_{j+se_{n_0}}$, otherwise $e=e_{i+so_{n_0+1}-1}+f_{j+se_{n_0}+1}$. In addition, we know for the sequence of blowups that for $n\geq 0$
\[
\begin{array}{rlr}
   e_{i+so_{n}+k}=  & e_{i+so_{n}}+k\cdot f_{j+se_{n}+1} & \text{for } k=1,...,j_{2n+1},\\
   f_{j+se_{n}+k+1}=  & k\cdot e_{i+so_{n+1}} + f_{j+se_{n}+1} & \text{for } k=1,...,j_{2n+2}.
\end{array}
\]
Now let us rewrite $b$ in terms of $i$, $j$, and $j_k$ using notation as in \S \ref{not 2}. 
\[
{
\begin{array}{rl}
    2b= & 2\delta \cdot i+2\gamma_0=\delta \cdot e_i+f_1\cdot \gamma_0= \gamma_0(e_i\cdot j+f_1)+\gamma_1\cdot e_i 
 =\gamma_0\cdot f_{j+1}+\gamma_1\cdot e_i =\\
   =  & \gamma_1(j_1 \cdot f_{j+1}+e_i)+\gamma_2 \cdot f_{j+1}= \gamma_1\cdot e_{i+so_1}+\gamma_2 \cdot f_{j+1}=...=\\
   = &\gamma_{2n_0-2}f_{j+se_{n_0-1}+1}+\gamma_{2n_0-1}e_{i+se_{n_0-1}}=\gamma_{2n_0-1}e_{i+so_{n_0}}+\gamma_{2n_0}f_{j+se_{n_0-1}+1}
\end{array}
}
\]
Here we have two different cases. 
\begin{itemize}
\item If $k_0=2n_0$, notice that,
$\gamma_{2n_0-1}=j_{2n_0}$ and $\gamma_{2n_0}=1$. Therefore,
\[
2b=j_{2n_0}e_{i+so_{n_0}}+f_{j+se_{n_0-1}+1}=f_{j+se_{n_0}}+e_{i+so_{n_0}}=e.
\]
\item If $k_0=2n_0+1$, notice that $\gamma_{2n_0}=j_{2n_0+1}$ and $\gamma_{2n_0+1}=1$. Therefore,
\[
2b=j_{2n_0+1}f_{j+se_{n_0}+1}+e_{i+so_{n_0}}=e_{i+so_{n_0+1}-1}+f_{j+se_{n_0}+1}=e.
\]
\end{itemize}
In both cases, as desired, we obtain $e=2b$. Therefore, after all the contractions, we get {$\pi_{a,b}^*(D)\sim \widetilde D+2b\cdot E$}. To compute the self-intersection of $\widetilde D$:
\[
{
(\widetilde D)^2=(\pi_{a,b}^*(D)-2bE)^2=\left(\pi_{a,b}^*(D)\right)^2-4b\left(\pi_{a,b}^*(D)\right)\cdot E+4b^2E^2=3-\frac{4b}{a}<0 \;\;\Leftrightarrow \;\; 3a<4b.}
\]
\end{proof}
{
\begin{remark}
    Following a similar proof and taking into account that $a=b+\delta$, it is easy to check that $\pi_{a,b}^*(L)=\widetilde L+aE$, where $\widetilde L$ is the strict transform of $L$ after the weighted blowup.
\end{remark}
\begin{corollary}
    Let $\pi_{a,b}: X_{a,b}\to X$ be a $(a,b)$-weighted blowup of the point $p_0\in X$, as described in Theorem \ref{maintheo}. Let $L$ be the unique (-1)-curve passing through the point $p_0$ and $D$ the curve described above. Then, 
    \[
\pi_{a,b}^*(-K_X)\sim \frac{1}{2}\left(\widetilde L+ \widetilde D +(a+2b)E\right).
    \]
\end{corollary}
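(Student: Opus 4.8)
The plan is to deduce the statement from a single pullback computation, using the linear equivalence $-2K_X\sim L+D$ established above via the Bertini involution on the weak del Pezzo surface $\mathbf{X}$ (recall $D=\pi_{1,1}(\overset{\circ}{D})$ and $\overset{\circ}{F^{(1)}}+\overset{\circ}{D}+\overset{\circ}{L}\sim -2K_{\mathbf{X}}$ pushes down to $-2K_X\sim L+D$). Since $\pi_{a,b}:X_{a,b}\to X$ is a birational morphism, pullback of divisor classes preserves linear equivalence, so applying $\pi_{a,b}^*$ gives
\[
\pi_{a,b}^*(-2K_X)\sim \pi_{a,b}^*(L)+\pi_{a,b}^*(D).
\]

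Next I would substitute the two ingredients already in hand. The Remark preceding the statement gives $\pi_{a,b}^*(L)=\widetilde L+aE$, and Proposition \ref{D} gives $\pi_{a,b}^*(D)=\widetilde D+2bE$. Adding these,
\[
\pi_{a,b}^*(-2K_X)\sim \widetilde L+\widetilde D+(a+2b)E,
\]
and dividing by $2$ (working with $\Q$-divisors) yields the claimed $\Q$-linear equivalence $\pi_{a,b}^*(-K_X)\sim \tfrac12\bigl(\widetilde L+\widetilde D+(a+2b)E\bigr)$.

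The only point needing a word of care is that Proposition \ref{D} is stated under the hypothesis $3a<4b$; but this is automatic from the standing assumption $\tfrac{\sqrt3}{2}a\le b\le a$, since $\tfrac{\sqrt3}{2}>\tfrac34$ forces $4b\ge 2\sqrt3\,a>3a$. Hence no extra hypothesis is required in the corollary, and there is no real obstacle: the substantive work — the Zariski-decomposition bookkeeping through the resolutions $f_p$, $f_q$ and the chain of contractions $g$, together with the verification that the coefficient of $E$ collapses to $2b$ — was already carried out in Proposition \ref{D}, so the corollary is an immediate consequence.
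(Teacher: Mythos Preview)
Your argument is correct and is exactly the intended one: the paper states this as a corollary without proof precisely because it follows immediately from $-2K_X\sim L+D$, the Remark giving $\pi_{a,b}^*(L)=\widetilde L+aE$, and Proposition~\ref{D} giving $\pi_{a,b}^*(D)=\widetilde D+2bE$. Your check that $3a<4b$ is automatic under the standing assumption $\tfrac{\sqrt3}{2}a\le b$ is a nice extra; note that this hypothesis in Proposition~\ref{D} is in fact only used for the negativity of $(\widetilde D)^2$, not for the pullback identity itself.
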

}

\section{Proof of the Main Theorem}\label{pmaintheo}

In the previous section, we prepared all the ingredients to prove the Main Theorem. Now, we divide this proof into two results:

\begin{theorem}\label{< ineq}
Let $X\subset \PP (1,1,1,2)$ be a smooth del Pezzo surface of degree 2 and let $p_0=(x_0,y_0,z_0,w_0)\in X$ be a closed point with $w_0\neq 0$. Assume {that there is a unique (-1)-curve $L$ passing through the point $p_0$}. Then $\delta_{p_0}(X)\leq\frac{6}{71} (11 + 8 \sqrt{3})$.
\end{theorem}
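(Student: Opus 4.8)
The plan is to test $\delta_{p_0}(X)$ against the divisors $E=E_{a,b}$ exceptional over $X$ for the $(a,b)$-weighted blowup $\pi_{a,b}\colon X_{a,b}\to X$ at $p_0$, for coprime pairs with $\tfrac{\sqrt 3}{2}a\le b\le a$ (so $r:=a/b\in[1,2/\sqrt3]$ and in particular $3a<4b$), and to show
\[
\frac{A_X(E_{a,b})}{S(-K_X;E_{a,b})}\;\longrightarrow\;\frac6{71}\bigl(11+8\sqrt3\bigr)\qquad\text{as }a/b\to 2/\sqrt3 .
\]
Since the center of every $E_{a,b}$ is $p_0$, we have $\delta_{p_0}(X)\le \tfrac{A_X(E_{a_m,b_m})}{S(-K_X;E_{a_m,b_m})}$ along any such sequence, so passing to the limit gives the inequality. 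The log discrepancy is immediate from $K_{X_{a,b}}=\pi_{a,b}^*K_X+(a+b-1)E$, giving $A_X(E_{a,b})=a+b$; all the work is in computing $S(-K_X;E_{a,b})=\tfrac1{2}\int_0^{\tau(E)}\vol\bigl(\pi_{a,b}^*(-K_X)-tE\bigr)\,dt$, i.e. the Zariski decomposition of $D_t:=\pi_{a,b}^*(-K_X)-tE$ for $t\in[0,\tau(E)]$.

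For this I would feed in the curves $\widetilde L,\widetilde D$ built in Section \ref{Finding D}. By the corollary to Proposition \ref{D} we have $\pi_{a,b}^*(-K_X)\sim\tfrac12(\widetilde L+\widetilde D+(a+2b)E)$, so $D_t\sim\tfrac12\widetilde L+\tfrac12\widetilde D+(\tfrac a2+b-t)E$; and from $\pi_{a,b}^*(L)=\widetilde L+aE$, $\pi_{a,b}^*(D)=\widetilde D+2bE$, $-2K_X\sim L+D$, $L^2=-1$, $D^2=3$, $L\cdot D=3$ we get $E^2=-\tfrac1{ab}$, $\widetilde L\cdot E=\tfrac1b$, $\widetilde D\cdot E=\tfrac2a$, $(\widetilde L)^2=-\tfrac{a+b}{b}$, $(\widetilde D)^2=3-\tfrac{4b}{a}$, $\widetilde L\cdot\widetilde D=1$, and therefore $D_t\cdot E=\tfrac t{ab}>0$, $D_t\cdot\widetilde L=1-\tfrac tb$, $D_t\cdot\widetilde D=3-\tfrac{2t}{a}$. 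Using that $p_0$ lies on the unique $(-1)$-curve $L$, one shows the negative part $N(t)$ is supported on $\widetilde L,\widetilde D$ only, and peels off three Zariski chambers: \emph{(i)} on $[0,b]$, $N(t)=0$ and $\vol(D_t)=D_t^2=2-\tfrac{t^2}{ab}$ (here $D_b=\tfrac12\widetilde L+\tfrac12\widetilde D+\tfrac a2E$ is visibly nef, so $\widetilde L$ is the first curve to hit $N(t)$); \emph{(ii)} on $[b,t_2]$ with $t_2:=\tfrac{a(3a+4b)}{3a+2b}$, $N(t)=\tfrac{t-b}{a+b}\,\widetilde L$ and $\vol(D_t)=2-\tfrac{t^2}{ab}+\tfrac{(t-b)^2}{b(a+b)}$; \emph{(iii)} on $[t_2,\tau]$ with $\tau:=\tfrac{a+2b}{2}$, $N(t)=x(t)\widetilde L+y(t)\widetilde D$ with $x,y$ affine in $t$, solving $P(t)\cdot\widetilde L=P(t)\cdot\widetilde D=0$; the matrix $\bigl(\begin{smallmatrix}-\tfrac{a+b}b&1\\ 1&3-\tfrac{4b}a\end{smallmatrix}\bigr)$ has determinant $\tfrac{4b^2-3a^2}{ab}\ge0$, negative definite precisely when $b>\tfrac{\sqrt3}2a$, and one gets $\vol(D_t)=\tfrac{12}{4b^2-3a^2}(t-\tau)^2$. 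Here $\tau$ really is the pseudoeffective threshold, because $D_\tau\sim\tfrac12\widetilde L+\tfrac12\widetilde D$ lies in the negative-definite span of $\widetilde L,\widetilde D$ and so has zero positive part.

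Integrating these three pieces and dividing by $(-K_X)^2=2$ expresses $S(-K_X;E_{a,b})$ as an explicit elementary function of $a,b$; since both $A_X(E_{a,b})=a+b$ and $S(-K_X;E_{a,b})$ scale linearly under $(a,b)\mapsto(\lambda a,\lambda b)$, the ratio $\Phi(r):=\tfrac{A_X(E_{a,b})}{S(-K_X;E_{a,b})}$ depends only on $r=a/b$ and is continuous on $[1,2/\sqrt3]$. Evaluating the formula at $r=2/\sqrt3$ — formally $a=2$, $b=\sqrt3$, where $4b^2-3a^2=0$ so chamber (iii) collapses to the point $t_2=\tau=1+\sqrt3$ — gives $\int_0^\tau\vol(D_t)\,dt=\tfrac{2+5\sqrt3}{3}$, hence $S=\tfrac{2+5\sqrt3}{6}$ and $\Phi(2/\sqrt3)=\tfrac{6(2+\sqrt3)}{2+5\sqrt3}=\tfrac6{71}(11+8\sqrt3)$, using $(2+5\sqrt3)(5\sqrt3-2)=71$. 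Thus, for any coprime sequence $(a_m,b_m)$ with $\tfrac{\sqrt3}2a_m\le b_m\le a_m$ and $a_m/b_m\to 2/\sqrt3$ (which, with $\wt(u)=a_m$, $\wt(v)=b_m$ and $L=\{u=0\}$, is exactly the sequence of Theorem \ref{maintheo}), we obtain $\delta_{p_0}(X)\le\lim_m\Phi(a_m/b_m)=\tfrac6{71}(11+8\sqrt3)$.

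The main obstacle is the middle step: proving that the Zariski decomposition of $\pi_{a,b}^*(-K_X)-tE$ is governed by only the three curves $E,\widetilde L,\widetilde D$, i.e. that no other negative curve of $X_{a,b}$ enters $N(t)$ for $t\le\tau$. This is exactly where the hypothesis that $p_0$ lies on a unique $(-1)$-curve enters, together with the explicit structure of the weak del Pezzo surface of degree $1$ from Section \ref{Finding D} (which pins down $\widetilde D$ and the intersections $L\cdot D$, $D^2$) and the bound $3a<4b$ (which, for instance, keeps the $\widetilde L$-coefficient $\tfrac{t-b}{a+b}$ of $N(t)$ below $\tfrac13$ throughout chamber (ii), hence $P(t)$ nef against the remaining negative curves). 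Once the chamber structure is established, the rest is three polynomial integrals and an elementary limit.
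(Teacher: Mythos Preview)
Your proposal is correct and follows essentially the same route as the paper: the same weighted blowups, the same curves $\widetilde L,\widetilde D,E$ with the same intersection numbers, the same three Zariski chambers with thresholds $b$, $\tfrac{a(3a+4b)}{3a+2b}$, $\tfrac{a+2b}{2}$, and the same upper bound extracted at $a/b=2/\sqrt3$. The only cosmetic difference is that the paper carries the integration through to the closed form $S(-K_X;E)=\tfrac{15a^2+34ab+8b^2}{12(3a+2b)}$ and writes the ratio as $\tfrac{12(3\mu+2)(1+\mu)}{15\mu^2+34\mu+8}$ in $\mu=a/b$, then identifies $\mu_0=2/\sqrt3$ as its minimizer, whereas you evaluate the limit directly at $a=2,\,b=\sqrt3$ (where chamber (iii) degenerates); for the upper bound either suffices, and your numerical evaluation $\int_0^\tau\vol(D_t)\,dt=\tfrac{2+5\sqrt3}{3}$ checks out.
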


\begin{proof}
For each $a,b>0$, let $\nu_{a,b}$ be the quasi-monomial valuation over ${p_0}\in X$ defined by $\nu_{a,b}(u)=a$ and $\nu_{a,b}(v)=b$. We need to identify the minimizer of $\frac{A_X(\nu_{a,b})}{S_X(\nu_{a,b})}$. In order to do this, choose coprime integers $a,b>0$ and let $\pi: Y=X_{a,b}\to X$ be the weighted blow up at $p_0$ with $\wt(u)=a$ and $\wt(v)=b$. Let $E$ be the exceptional divisor and let $\widetilde{L}$, and {$\widetilde{D}$} be the strict transforms of $L$ and {$D$}, respectively, {where $D$ is described above}. Assume that $\frac{\sqrt{3}}{2}a<b<a$.
	
\vskip 0.3cm	
By definition, $\pi^*(L)=\widetilde{L}+m_L E$ and {$\pi^*(D)=\widetilde{D}+m_C E$} where in our case  $m_L=\mult_{p_0}L=a$ and {$m_D=\mult_{p_0}D=2b$ as we proved in Theorem \ref{D}}. It is known that $E^2=-\frac{1}{ab}$, so we get: $(\widetilde{L})^2=-\frac{a+b}{b}$, {$(\widetilde{D})^2=3-\frac{4b}{a}$, $(\widetilde{L})\cdot E=\frac{1}{b}$, $(\widetilde{D})\cdot E=\frac{2}{a}$, and $\widetilde{D}\cdot \widetilde{L}=1$}.

As we saw in the previous section, the stable base locus of 
\[
\pi^*(-K_X)-tE\sim  \frac{1}{2}\left(\widetilde{L}+\widetilde{D}\right)+\left(\frac{a+2b}{2}-t\right)E
\]
is contained in $\widetilde{D}\cap \widetilde{L}$ for all $0\leq t\leq \frac{a+2b}{2}$ and we have the following Zariski decomposition:

\[
    N(\pi^*(-K_X)-tE)=\left\lbrace
	\begin{array}{ll}
		0 & 0\leq t\leq b,\\
		\frac{t-b}{a+b}\widetilde{L} & b<t\leq \frac{a(3a+4b)}{3a+2b},\\
  \frac{3t(2b-a)-4b^2}{4b^2-3a^2} \widetilde{L}+\frac{t(3a+2b)-a(3a+4b)}{4b^2-3a^2} \widetilde{D} & \frac{a(3a+4b)}{3a+2b}<t\leq \frac{a+2b}{2},
	\end{array}
	\right.
\]
and
\[
	P(\pi^*(-K_X)-tE)=\left\lbrace
	\begin{array}{ll}
		\frac{1}{2}\left(\widetilde{L}+\widetilde{D}\right)+\left(\frac{a+2b-2t}{2}\right)E & 0\leq t\leq b\\
	\frac{a+3b-2t}{2(a+b)}\widetilde{L}+\frac{1}{2}\widetilde{D}+\left(\frac{a+2b-2t}{2}\right)E & b<t\leq \frac{a(3a+4b)}{3a+2b}\\
  \frac{3t(2b-a)(a+2b-2t)}{2(4b^2-3a^2)} \widetilde{L}+\frac{(3a+2b)(a+2b-2t)}{2(4b^2-3a^2)} \widetilde{D} +\left(\frac{a+2b-2t}{2}\right)E & \frac{a(3a+4b)}{3a+2b}<t\leq \frac{a+2b}{2}
	\end{array}
	\right.
\]
Then, by \cite[Theorem B, Example 4.7]{BFJ09} and \cite[Corollary C]{LM09}, we know that 
\[
	\vol_{Y\vert E}(-\pi^*K_X-tE)=P_\sigma(\pi^*(-K_X)-tE)\cdot E=-\frac{1}{2}\cdot \frac{d}{dt}\vol (-\pi^*K_X-tE)
\]
Thus, we get
\[
	\vol_{Y\vert E}(\pi^*(-K_X)-tE)=\left\lbrace
	\begin{array}{ll}
		\frac{t}{ab} & 0\leq t\leq b\\
		\frac{a+t}{a(a+b)} & b< t \leq \frac{a(3a+4b)}{3a+2b}\\
		\frac{6(a+2b-2t)}{4b^2-3a^2} & \frac{a(3a+4b)}{3a+2b}<t\leq \frac{a+2b}{2}
	\end{array}
	\right.
\]
and
\[
	S(-K_X;E)=\frac{2}{(-K_X)^2}\int_0^\infty t\cdot \vol_{Y\vert E}(\pi^*(-K_X)-tE)dt=\frac{15a^2+34ab+8b^2}{12(3a+2b)}.
\]
Since $A_X(E)=a+b$, note that $\frac{A_X(\nu_{a,b})}{S_X(\nu_{a,b})}$ only depends on the ratio $\mu=\frac{a}{b}$, thus by continuity \cite[Proposition 2.4]{BLX22} we have
\[
	\frac{A_X(\nu_{a,b})}{S_X(\nu_{a,b})}=\frac{12(3\mu+2)(1+\mu)}{15\mu^2+34\mu+8}
\]
for all $a,b\in \R_+$. It achieves its minimum for $\frac{2}{\sqrt{3}}\geq\mu\geq\frac{2}{3}$ with the value $\lambda_0=\frac{6}{71} (11 + 8 \sqrt{3})$ at $\mu_0:=\frac{2}{\sqrt{3}}$. In particular, we have $\delta_{p_0}(X)\leq \lambda$. 
\end{proof}

It remains to show $\delta_{p_0}(X)\geq \lambda$. We use the same method as in Lemma A.6 of \cite{AZ20}.

\begin{theorem}\label{> ineq}
Let $X\subset \PP (1,1,1,2)$ be a smooth del Pezzo surface of degree 2 and let $p_0=(x_0,y_0,z_0,w_0)\in X$ be a closed point with $w_0\neq 0$. Assume {that there is a unique (-1)-curve $L$ passing through the point $p_0$}. Then $\delta_{p_0}(X)\geq\frac{6}{71} (11 + 8 \sqrt{3})$.
\end{theorem}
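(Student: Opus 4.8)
The strategy is to apply Theorem \ref{Kento form} with a carefully chosen plt blowup, and then to produce a sequence of valuations realizing the bound. We cannot simply apply Theorem \ref{Kento form} to the weighted blowup $E_{a,b}$ directly for a single pair $(a,b)$ — since $\mu_0 = 2/\sqrt 3$ is irrational there is no single integral weighted blowup achieving equality. Instead, the plan is to pick the plt blowup $\pi:Y\to X$ to be the ordinary blowup $\pi_{1,1}:\mathbf X\to X$ at $p_0$, with exceptional divisor $E = \overset{\circ}{F^{(1)}}$ (which is the $(-1)$-curve on the weak degree-one del Pezzo $\mathbf X$). Then $A_X(E) = 2$, and a direct volume computation on the Mori Dream surface $\mathbf X$ gives $S(-K_X;E)$; the first term $A_X(E)/S(-K_X;E)$ in the minimum of Theorem \ref{Kento form} should come out strictly larger than $\lambda_0 = \frac{6}{71}(11+8\sqrt 3)$, so it will not be the binding constraint. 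The work is therefore concentrated in the second term, $\min_{x\in E}\frac{1-\ord_x(\Delta_E)}{S(W^E_{\cdot,\cdot};x)}$.

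First I would compute the Zariski decomposition of $\pi_{1,1}^*(-K_X) - tE$ on $\mathbf X$. Using the relation $-2K_X \sim L + D$ pulled back as $\pi_{1,1}^*(-2K_X)\sim \overset{\circ}{L} + \overset{\circ}{D} + 2\overset{\circ}{F^{(1)}}$, together with the intersection numbers established in the lemma above ($\overset{\circ}{F^{(1)}}\cdot \overset{\circ}{D}=2$, $\overset{\circ}{F^{(1)}}\cdot\overset{\circ}{L}=\overset{\circ}{L}\cdot\overset{\circ}{D}=1$, and $\overset{\circ}{L}{}^2 = \overset{\circ}{D}{}^2 = -1$), I can track where the pseudoeffective ray leaves the nef cone and which of $\overset{\circ}{L}$, $\overset{\circ}{D}$ enter the negative part, getting $P(t)$ and $N(t)$ piecewise on the relevant ranges of $t\in[0,\tau]$. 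Then $h(t)$ is assembled from $P(t)\cdot E$ and the local intersection multiplicity $(N(t)\cdot E)_x$, which depends on whether $x$ is the point $E\cap\overset{\circ}{L}$, the point $E\cap\overset{\circ}{D}$, or a generic point of $E$; the key input from the lemma is that $\overset{\circ}{F^{(1)}}$, $\overset{\circ}{D}$, $\overset{\circ}{L}$ do not all pass through one point, so the two special points of $E$ are distinct. Also $\Delta_E$ here is supported at those same one or two points, with coefficients coming from the different, which I expect to be small rational numbers.

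The crucial point is the choice of the minimizing point $x\in E$. The generic point of $E$ gives one value of $\frac{1-\ord_x\Delta_E}{S(W^E;x)}$, the point $E\cap\overset{\circ}{L}$ gives another; the minimum over $x$ of the recipe in Theorem \ref{Kento form} should reproduce exactly the bound $\lambda_0$ at the special point $x_0 = E\cap\overset{\circ}{L}$ — because this corresponds, under the resolution $g$, to the sequence of weighted blowups $E_{a_m,b_m}$ with $a_m/b_m\to 2/\sqrt 3$ which was used to prove the upper bound in Theorem \ref{< ineq}. Concretely, the refinement $W^E_{\cdot,\cdot}$ at $x_0$ inherits the geometry of the chain of exceptional curves over $p_0$, and the integral $S(W^E_{\cdot,\cdot};x_0) = \frac{2}{(-K_X)^2}\int_0^\tau h(t)\,dt$ evaluates to $\frac{1}{\lambda_0}$ after substituting the piecewise formulas for $P(t)\cdot E$ and $(N(t)\cdot E)_{x_0}$. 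I would verify that at every other point of $E$ the quotient is $\geq \lambda_0$, so that the minimum in Theorem \ref{Kento form} equals $\lambda_0$, giving $\delta_{p_0}(X)\geq \lambda_0$.

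The main obstacle I anticipate is bookkeeping: correctly identifying $N(t)\vert_E$ and its local vanishing order $\ord_{x_0}(N(t)\vert_E)$ across all the breakpoints of the Zariski decomposition, and matching the resulting integral to the value $\lambda_0$ obtained from the $(a,b)$-side computation — this is the step where the irrationality $2/\sqrt 3$ actually surfaces, as the optimal $t$-breakpoint or the optimal $v$-cutoff in $\vol_E(P(t)\vert_E - vx_0)$ will involve $\sqrt 3$. A secondary subtlety is to make sure the first term $A_X(E)/S(-K_X;E)$ and the quotients at all non-optimal points of $E$ are genuinely $\geq \lambda_0$, so that the $\min$ in Theorem \ref{Kento form} is controlled; combined with Theorem \ref{< ineq} this then yields $\delta_{p_0}(X) = \lambda_0$ and completes the proof of Theorem \ref{maintheo}.
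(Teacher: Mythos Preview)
Your plan has a genuine gap: applying Theorem \ref{Kento form} to the \emph{single} ordinary blowup $\pi_{1,1}:\mathbf X\to X$ can only ever produce a \emph{rational} lower bound, so it cannot reach the irrational number $\lambda_0=\tfrac{6}{71}(11+8\sqrt3)$. Concretely, $\mathbf X$ is smooth and $E=\overset{\circ}{F^{(1)}}$ is Cartier, so the different is $\Delta_E=0$ (not ``small rational coefficients'' as you wrote). The Zariski decomposition of $\pi_{1,1}^*(-K_X)-tE$ on the Mori dream surface $\mathbf X$ has rational breakpoints and rational coefficients, so $P(t)\cdot E$ and $(N(t)\cdot E)_x$ are piecewise $\Q$-linear in $t$, and the integral $S(W^E_{\cdot,\cdot};x)=\tfrac{2}{(-K_X)^2}\int_0^\tau h(t)\,dt$ is rational for every $x\in E$. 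There is no ``optimal $t$-breakpoint'' or ``$v$-cutoff'' to introduce $\sqrt3$: the $v$-integral in $h(t)$ is already evaluated to $(P(t)\cdot E)^2/2$, and there is no further optimisation. (Incidentally, $\overset{\circ}{L}{}^2=-2$, not $-1$; it is the $(-2)$-curve on the weak degree-one surface.) The ordinary blowup is in fact what gives the bound $\delta_{p_0}\ge 60/31$ quoted from \cite{acc2021}, which is strictly below $\lambda_0$.

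The paper's proof does exactly what you ruled out in your first paragraph: it applies Theorem \ref{Kento form} not to the ordinary blowup but to the weighted blowups $\pi_m:Y_{a_m,b_m}\to X$ themselves, for a \emph{sequence} of coprime weights with $a_m/b_m\to 2/\sqrt3$. Here $E_m$ carries two orbifold points and $\Delta_{E_m}=(1-\tfrac{1}{b_m})P_1^{(m)}+(1-\tfrac{1}{a_m})P_2^{(m)}$; the Zariski decomposition computed in Theorem \ref{< ineq} is reused to evaluate $S(W^{E_m}_{\cdot,\cdot};x)$ at $P_1^{(m)},P_2^{(m)},P_3^{(m)}$ and at a generic point of $E_m$. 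Each $m$ yields a rational lower bound for $\delta_{p_0}(X)$, and the irrational $\lambda_0$ appears only in the limit $m\to\infty$, where $A_X(E_m)/S(-K_X;E_m)\to\lambda_0$ and the remaining quotients are checked to stay $\ge\lambda_0$. That limiting step over the family of plt blowups is precisely the mechanism you are missing.
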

\begin{proof}
Choose a sequence of coprime integers $a_m,b_m>0$ ($m=1,2,...$) such that $\mu_m:=\frac{a_m}{b_m}\to \mu_0$ ($m\to\infty$), where $\frac{2}{3}<\mu_m<\frac{2}{\sqrt{3}}$. Let $\pi_m: Y_m=Y_{a_m,b_m}\to X$ be the corresponding weighted blow up and let $E_m$ be the exceptional divisor. Let $P_1^{(m)}=\widetilde{L}\cap E_m$, {$\lbrace P_2^{(m)}, P_3^{(m)}\rbrace=\widetilde{D}\cap E_m$, where $P_1^{(m)}$ and $P_2^{(m)}$ are the singular points $p$ and $q$, respectively}. Let $W_{\overrightarrow{\cdot,\cdot}}^{E_m}$ be the refinement by $E_m$ of the complete linear series associated to $-K_X$. \\

Let $\Delta_m=\Diff_{E_m}(0)=(1-\frac{1}{b_m})P_1^{(m)}+(1-\frac{1}{a_m})P_2^{(m)}$, $\lambda_m=\frac{A_X(E_m)}{S(-K_X;E_m)}$. Now using the formula in Corollary \ref{Kento form} we know that: 
 \[
\delta_p(X)\geq \min \left\{\frac{A_X(E_m)}{S(-K_X;E_m)},\; \min_{x\in E_m} \frac{1-\ord_x(\Delta_{E_m})}{S(W_{\cdot, \cdot}^{E_m}; x)}\right\}
 \]
We already computed the minimum of $\frac{A_X(E_m)}{S(-K_X;E_m)}$ in Theorem \ref{< ineq}. Therefore, to finish the proof, we need the compute the other minimum. Notice that in $E_m$ we have two singularities, $P_1^{(m)}$ and $P_2^{(m)}$, so we have four types of points $x\in E_m$. Here, we denote by $P(t)$ the positive part of the Zariski decomposition of Theorem \ref{< ineq}.
\begin{itemize}
    \item If $x=P_1^{(m)}$, 
    \[
    \frac{1-\ord_x(\Delta_{E_m})}{S(W_{\cdot, \cdot}^{E_m}; x)}=\frac{1-\left(1-\frac{1}{b_m}\right)}{S(W_{\cdot, \cdot}^{E_m}; P_1^{(m)})}=\frac{12(2b_m+3a_m)^2}{45a_m^2+60a_mb_m+44b_m^2},
    \]
    where
    \[
    \begin{split}
    S(W_{\cdot,\cdot}^{E_m};P_1^{(m)}) & =\int_0^{\frac{a+2b}{2}}\left( \left( P(t)\cdot E_m\right)\cdot \left( N(t)\cdot E_m\right)_{P_1^{(m)}}+\frac{\left( P(t)\cdot E_m\right)^2 }{2}  \right) \; d t\\
    & =\frac{45a_m^2+60a_mb_m+44b_m^2}{12b_m(2b_m+3a_m)^2}
    \end{split}
    \]
    Now since $\mu_m\to \mu$ as $m\to \infty$, let us take the limit
    \begin{equation}\label{P1}
    \lim_{m\to \infty} \frac{1}{b \cdot S(W_{\cdot, \cdot}^{E_m}; P_1^{(m)})}=\frac{6}{47} \left(11 + 3 \sqrt{3}\right)
    \end{equation}
    \item If $x=P_2^{(m)}$,
    \[
    \frac{1-\ord_x(\Delta_{E_m})}{S(W_{\cdot, \cdot}^{E_m}; x)}=\frac{1-\left(1-\frac{1}{a_m}\right)}{S(W_{\cdot, \cdot}^{E_m}; P_2^{(m)})}=\frac{3(2b_m+3a_m)^2}{18a_m^2+12a_mb_m+4b_m^2},
    \]
    where
    \[
    \begin{split}
    S(W_{\cdot,\cdot}^{E_m};P_2^{(m)})=\frac{2(9a_m^2+6a_mb_m+2b_m^2)}{3a_m(2b_m+3a_m)^2}
    \end{split}
    \]
    Now since $\mu_m\to \mu$ as $m\to \infty$, let us take the limit
    \begin{equation}\label{P2}
    \lim_{m\to \infty} \frac{1}{a\cdot S(W_{\cdot, \cdot}^{E_m}; P_2^{(m)})}=\frac{6}{37} \left(8 + 3 \sqrt{3}\right)
    \end{equation}
   \item {If $x=P_3^{(m)}$,
    \[
    \frac{1-\ord_x(\Delta_{E_m})}{S(W_{\cdot, \cdot}^{E_m}; x)}=\frac{1}{S(W_{\cdot, \cdot}^{E_m}; P_3^{(m)})}=\frac{3a_m(2b_m+3a_m)^2}{18a_m^2+12a_mb_m+4b_m^2},
    \]
    where
    \[
    \begin{split}
    S(W_{\cdot,\cdot}^{E_m};P_3^{(m)})=\frac{2(9a_m^2+6a_mb_m+2b_m^2)}{3a_m(2b_m+3a_m)^2}
    \end{split}
    \]
    Now since $\mu_m\to \mu$ as $m\to \infty$, let us take the limit
    \begin{equation}\label{P3}
    \lim_{m\to \infty} \frac{1}{ S(W_{\cdot, \cdot}^{E_m}; P_3^{(m)})}=\frac{12}{37} \left(8 + 3 \sqrt{3}\right)
    \end{equation}}
    \item If $x\neq P_1^{(m)},\,P_2^{(m)},\, P_3^{(m)}$,
    \[
    \frac{1-\ord_x(\Delta_{E_m})}{S(W_{\cdot, \cdot}^{E_m}; x)}=\frac{1}{S(W_{\cdot, \cdot}^{E_m}; x)}=\frac{18a_m^2 + 12a_mb_m}{15a_m - 2b_m},
    \]
    where
    \[
    \begin{split}
    S(W_{\cdot,\cdot}^{E_m};x)=\frac{15a_m - 2b_m}{18a_m^2 + 12a_mb_m}
    \end{split}
    \]
    Now since $\mu_m\to \mu$ as $m\to \infty$, let us take the limit
    \begin{equation}\label{genx}
    \lim_{m\to \infty} \frac{1}{ S(W_{\cdot, \cdot}^{E_m}; x)}=\frac{12}{37} \left(8 + 3 \sqrt{3}\right)
    \end{equation}
    If we put it all together, we get that
     \[
\delta_p(X)\geq \min \left\{\lambda,\eqref{P1}, \eqref{P2},\eqref{P3},\eqref{genx}\right\}=\lambda=\frac{6}{71} (11 + 8 \sqrt{3})
 \]
\end{itemize}
\end{proof}
This concludes the proof of the Main Theorem \ref{maintheo}.

{\footnotesize
\bibliography{mybibtex}}
\Addresses

\end{document}